\documentclass[11pt,a4paper]{article}
\usepackage{titling,lipsum}
\usepackage{caption}
\usepackage{subcaption}
\usepackage{graphicx} 
\usepackage{url}      
\usepackage{amssymb}
\usepackage{amsmath}  
\usepackage{amsthm}
\usepackage{amsfonts}%
\usepackage{float}
\newtheorem{theorem}{Theorem}

\newtheorem{lemma}{Lemma}
\theoremstyle{remark}
\newtheorem{remark}{Remark}

\begin{document}
\date{}
\title{Block monotone iterative methods for solving coupled systems of nonlinear elliptic problems}
\author{Mohamed Al-Sultani
\\
Institute of Fundamental Sciences, Massey University,\\
 Palmerston North, New Zealand\\
E-mail: M.Al-Sultani@massey.ac.nz}\maketitle
\begin{abstract}
 This paper investigates numerical methods for solving  coupled system of nonlinear elliptic problems. We utilize block monotone iterative methods based on   Jacobi and Gauss--Seidel methods to solve difference schemes which approximate the coupled system of nonlinear elliptic problems, where reaction functions are quasimonotone nondecreasing. In the view of upper and lower solutions method, two monotone upper and lower sequences of solutions are constructed, where the monotone property ensures  the theorem on existence of  solutions to problems with quasimonotone  nondecreasing reaction functions. Construction of initial upper and lower solutions is presented. The sequences of solutions generated by the block Gauss--Seidel  method converge not slower than by the block Jacobi method.
 \end{abstract}
\section{Introduction}
Several problems in the chemical, physical and engineering sciences are characterized by coupled systems of nonlinear elliptic equations \cite{P92}. 
 In this paper, we construct  block monotone iterative methods for solving  the coupled system of nonlinear elliptic equations
\begin{equation}\label{cont1a}
  -\mbox{L}_{\alpha}u_{\alpha}(x,y)+f_{\alpha}(x,y,u)=0,\quad (x,y)\in \omega,\quad \alpha=1,2,
\end{equation}
$$
\omega=\{(x,y):0<x<1,\quad 0<y<1\},
$$
\begin{equation*}
  u(x,y)=g(x,y),\quad (x,y)\in \partial \omega,
\end{equation*}
where $u=(u_{1},u_{2})$, $f=(f_{1},f_{2})$, $g=(g_{1},g_{2})$, and $\partial \omega$ is the boundary of $\omega$. The differential operators $\mbox{L}_{\alpha}$, $\alpha=1,2$, are defined by
$$
\mbox{L}_{\alpha}u_{\alpha}(x,y)\equiv \varepsilon_{\alpha}(u_{\alpha,xx}+u_{\alpha,yy})-v_{\alpha}(x,y)(u_{\alpha,x}+u_{\alpha,y}),
$$
where
$\varepsilon_{\alpha}$, $\alpha=1,2$, are positive constants. It is assumed that the functions $f_{\alpha}$, $g_{\alpha}$, $v_{\alpha}$, $\alpha=1,2$,
are smooth in their respective domains.

For treating  such nonlinear problems numerically  using finite difference or finite element methods, the nonlinear problems are approximated by difference schemes which lead to nonlinear systems of algebraic equations.
The main  mathematical concern is to investigate whether these systems have  a solution and  to find  efficient, stable and computationally effective methods for solving these discrete systems. The method of upper and lower solutions  and its associated monotone iterations is a fruitful method for solving nonlinear difference schemes. The method of upper and lower solutions and its associated monotone iterations in the  continuous case was originally developed  for solving nonlinear elliptic and parabolic problems (see (\cite{P92}, \cite{s72}, for details). By using upper and lower  solutions as two initial iterations, one can construct two monotone sequences which converge monotonically from above and below to a solution of the problem. This monotone property ensures the theorem on existence and uniqueness  of solutions to problems.
\par
In the case of nonlinear difference schemes, the method of upper and lower solutions has been developed and applied for solving elliptic and parabolic problems (see \cite{I06}, \cite{pa65}, \cite{p93}, for details).
\par
Block monotone iterative methods, based on the method of upper and lower solutions, have been used for solving nonlinear scalar elliptic equations \cite{II05}, \cite{Pa82}, \cite{p95},  \cite{p&X03}, \cite{w11}. The basic idea of the block monotone iterative  methods is to decompose of a two dimensional problem into a series of one dimensional two-point boundary value problems. Each of one dimensional problem  can be solved efficiently by a standard computational scheme such as the Thomas algorithm.

The application of the method of upper and lower solutions  to coupled systems has more complexity. For solving coupled systems of nonlinear elliptic equations, monotone iterative methods based on  the method of upper and lower solutions have been developed  in \cite{p90} for continuous problems  and in \cite{I07}, \cite{I08}, \cite{I10}, \cite{w09} for discrete problems.

The aim of this article is to construct and  investigate  block monotone iterative methods  based on Jacobi and Gauss--Seidel methods  for solving coupled systems of nonlinear elliptic equations  with quasimonotone nondecreasing reaction functions $f_{\alpha}$, $\alpha=1,2$, which satisfy the inequalities
\begin{equation*}
-\frac{\partial f_{\alpha}}{\partial u_{\alpha^{\prime}}}\geq0,\quad (x,y)\in \overline{\omega},\quad \alpha^{\prime}\neq \alpha,\quad \alpha=1,2.
\end{equation*}
 The article is structured as follows. Section 2 deals with some properties of solutions to system (\ref{cont1a}). In section 3, we consider a nonlinear difference scheme which approximates the nonlinear elliptic problem (\ref{cont1a}). The construction of the block monotone  Jacobi and  Gauss--Seidel iterative methods is located in Section 4. Section 5 exhibits the construction of initial upper and lower solutions which are used as initial iterations in the monotone iterative methods. Finally, in Section 6, the convergence rate of the block monotone Jacobi and  Gauss--Seidel  iterative methods are compared.
\section{Properties of solutions to system (\ref{cont1a})}
Two vector functions $\widetilde{u}(x,y)=(\widetilde{u}_{1},\widetilde{u}_{2})$ and $\widehat{u}(x,y)=(\widehat{u}_{1},\widehat{u}_{2})$, are called, respectively, upper and lower solutions to (\ref{cont1a}), if they satisfy the inequalities
\begin{equation}\label{eq:cont1}
\widetilde{u}(x,y)\geq\widehat{u}(x,y),\quad (x,y)\in \overline{\omega},
\end{equation}
\begin{equation*}
-\mbox{L}_{\alpha}\widetilde{u}_{\alpha}(x,y)+f_{\alpha}(x,y,\widetilde{u})\geq0,\quad (x,y)\in \omega,
\end{equation*}
\begin{equation*}
 -\mbox{L}_{\alpha}\widehat{u}_{\alpha}(x,y)+f_{\alpha}(x,y,\widehat{u})\leq0,\quad  (x,y)\in \omega,
\end{equation*}
\begin{equation*}
\widehat{u}(x,y)\leq g(x,y) \leq \widetilde{u}(x,y),\quad (x,y)\in \partial \omega.
\end{equation*}
For a given ordered upper $\widetilde{u}$ and lower $\widehat{u}$ solutions, a sector $\langle \widehat{u}, \widetilde{u}\rangle$ is defined as follows
\begin{equation*}
  \langle \widehat{u}, \widetilde{u}\rangle=\left\{u(x,y);\quad \widehat{u}(x,y)\leq u(x,y)\leq \widetilde{u}(x,y),\quad (x,y)\in \overline{\omega}\right\}.
\end{equation*}
On the sector $\langle \widehat{u}, \widetilde{u}\rangle$, the vector  function $f(x,y,u)$ is assumed to satisfy the constraints
\begin{equation}\label{cont2}
  \frac{\partial f_{\alpha}(x,y,u)}{\partial u_{\alpha}}\leq c_{\alpha}(x,y),\quad u\in \langle \widehat{u},\widetilde{u}\rangle,\quad (x,y)\in \overline{\omega},\quad \alpha=1,2,
\end{equation}
\begin{equation}\label{eq:cont3}
-\frac{\partial f_{\alpha}(x,y,u)}{\partial u_{\alpha^{\prime}}}\geq0,\quad u\in \langle \widehat{u},\widetilde{u}\rangle, \quad (x,y)\in \overline{\omega},\quad \alpha^{\prime}\neq \alpha,\quad \alpha=1,2,
\end{equation}
where $c_{\alpha}(x,y)$, $\alpha=1,2$, are non-negative bounded functions.
The vector function $f(x,y,u)$ is called quasimonotone nondecreasing on $\langle \widehat{u},\widetilde{u}\rangle$, if it satisfies (\ref{eq:cont3}).

Consider the following iterative method for solving the nonlinear system (\ref{cont1a})
\begin{equation}\label{ccont1b}
 -\mbox{L}_{\alpha}u_{\alpha}^{(n)}(x,y)+c_{\alpha}(x,y)u^{(n)}_{\alpha}(x,y)=-f_{\alpha}(x,y,u^{(n-1)}),\quad (x,y)\in \omega,\ \alpha=1,2,
\end{equation}
\begin{equation*}
  u^{(n)}(x,y)=g(x,y),\quad (x,y)\in \partial \omega,
\end{equation*}
where $c_{\alpha}(x,y)$, $\alpha=1,2$, are defined in (\ref{cont2}).
\begin{theorem}\label{contth1}
Assume that the vector function $f(x,y,u)$ in (\ref{cont1a}) satisfies (\ref{cont2}) and (\ref{eq:cont3}). Let $\widetilde{u}=(\widetilde{u}_{1},\widetilde{u}_{2})$ and $\widehat{u}=(\widehat{u}_{1},\widehat{u}_{2})$ be ordered upper and lower solutions. Then the upper $\{\overline{u}^{(n)}\}$ and lower $\{\underline{u}^{(n)}\}$ sequences, generated by (\ref{ccont1b}) with
$\overline{u}^{(0)}=(\widetilde{u}_{1},\widetilde{u}_{2})$ and $\underline{u}^{(0)}=(\widehat{u}_{1},\widehat{u}_{2})$ converge monotonically, respectively, from above to a maximal solution $\overline{u}$ and from below to a minimal solution $\underline{u}$, such that
\begin{equation}\label{cont4}
\widehat{u}\leq \underline{u}^{(n-1)}\leq \underline{u}^{(n)}\leq \underline{u}\leq \overline{u}\leq\overline{u}^{(n)}\leq \overline{u}^{(n-1)}\leq \widetilde{u}
\quad \mbox{in} \quad \overline{\omega},\quad n\geq1.
\end{equation}
    If $s=(s_{1},s_{2})$ is  any other solution in $\langle \widehat{u},\widetilde{u}\rangle$, then $\underline{u}\leq s\leq \overline{u}$.
\end{theorem}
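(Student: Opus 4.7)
The plan is to establish the chain (\ref{cont4}) by induction on $n$ and then pass to the limit. I would first rewrite (\ref{ccont1b}) in the equivalent Picard form $(-L_\alpha + c_\alpha) u^{(n)}_\alpha = c_\alpha u^{(n-1)}_\alpha - f_\alpha(\cdot, u^{(n-1)})$, which isolates a linear elliptic problem at each step. The workhorse is a comparison principle for the operator $-L_\alpha + c_\alpha$: since $\varepsilon_\alpha > 0$ and $c_\alpha \geq 0$, any $w$ satisfying $(-L_\alpha + c_\alpha) w \geq 0$ in $\omega$ with $w \geq 0$ on $\partial\omega$ is nonnegative on $\overline{\omega}$. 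I would state this as a preliminary lemma and invoke it at every comparison below.

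For the base step I would set $w_\alpha = \widetilde{u}_\alpha - \overline{u}^{(1)}_\alpha$; the Picard form gives $(-L_\alpha + c_\alpha) w_\alpha = -L_\alpha \widetilde{u}_\alpha + f_\alpha(\cdot,\widetilde{u}) \geq 0$ by the upper-solution inequality, with $w_\alpha \geq 0$ on $\partial\omega$, so $\overline{u}^{(1)} \leq \widetilde{u}$; symmetrically $\widehat{u} \leq \underline{u}^{(1)}$. For the inductive step $\overline{u}^{(n)} \leq \overline{u}^{(n-1)}$, I would write the equation for $w_\alpha = \overline{u}^{(n-1)}_\alpha - \overline{u}^{(n)}_\alpha$ and expand $f_\alpha(\cdot,\overline{u}^{(n-1)}) - f_\alpha(\cdot,\overline{u}^{(n-2)})$ using the integral mean value theorem. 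The right-hand side takes the form $[c_\alpha - \int_0^1 \partial_{u_\alpha} f_\alpha\, dt](\overline{u}^{(n-2)}_\alpha - \overline{u}^{(n-1)}_\alpha) + [-\int_0^1 \partial_{u_{\alpha'}} f_\alpha\, dt](\overline{u}^{(n-2)}_{\alpha'} - \overline{u}^{(n-1)}_{\alpha'})$, in which both brackets are $\geq 0$ by (\ref{cont2}) and (\ref{eq:cont3}) respectively and both differences are $\geq 0$ by the inductive hypothesis; the maximum principle then closes the step. The lower monotonicity and the cross inequality $\underline{u}^{(n)} \leq \overline{u}^{(n)}$ follow from exactly the same template.

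Once (\ref{cont4}) is in place, each scalar sequence $\{\overline{u}^{(n)}_\alpha\}$ is pointwise monotone and bounded, hence converges to a limit $\overline{u}_\alpha$; analogously $\underline{u}^{(n)} \uparrow \underline{u}$. Standard Schauder (or $W^{2,p}$) estimates applied to the linear problems (\ref{ccont1b}), combined with the uniform bound $\widehat{u} \leq u^{(n)} \leq \widetilde{u}$ and the smoothness of $f_\alpha$, yield uniform $C^{2,\gamma}(\overline{\omega})$ bounds on the iterates, so passage to the limit shows that $\overline{u}, \underline{u}$ are classical solutions of (\ref{cont1a}). For maximality and minimality, any other solution $s \in \langle \widehat{u}, \widetilde{u} \rangle$ is trivially both an upper and a lower solution and a fixed point of the iteration, so re-running the same comparison argument with $s$ used in place of one of the initial iterates delivers $\underline{u}^{(n)} \leq s \leq \overline{u}^{(n)}$ for every $n$ and hence in the limit.

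I expect the main obstacle to be the mean-value step in the comparison of iterates: one must verify that the integration segment joining two successive iterates lies in $\langle \widehat{u}, \widetilde{u} \rangle$, so that the bounds (\ref{cont2}) and (\ref{eq:cont3}) apply pointwise, and one must use the quasimonotone sign condition (\ref{eq:cont3}) to turn the off-diagonal coupling term into a nonnegative contribution. This is the one place where the two-equation argument genuinely diverges from the classical scalar theory; everything else is routine bookkeeping around the comparison principle.
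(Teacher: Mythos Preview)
The paper does not actually prove this theorem; it simply states ``The proof of the theorem can be found in \cite{P92}.'' Your outline is correct and is essentially the classical argument from Pao's monograph: linearize each step as $(-L_\alpha + c_\alpha)u_\alpha^{(n)} = c_\alpha u_\alpha^{(n-1)} - f_\alpha(\cdot,u^{(n-1)})$, use the maximum principle for $-L_\alpha + c_\alpha$ together with (\ref{cont2}) and (\ref{eq:cont3}) to propagate the ordering, and pass to the limit via elliptic regularity. Your identification of the mean-value step and the role of the quasimonotone condition in handling the off-diagonal coupling is exactly the point that distinguishes the system case from the scalar one, so there is nothing to add.
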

The proof of the theorem can be found in \cite{P92}.
\section{The nonlinear difference scheme}
On  $\overline{\omega}$, we introduce a rectangular mesh $\overline{\Omega}^{h}=\overline{\Omega}^{hx}\times\overline{\Omega}^{hy}$:
\begin{equation*}
\overline{\Omega}^{hx}=\{x_{i},\quad  i=0,1,\ldots, N_{x};\quad x_{0}=0, \quad x_{N_{x}}=1; \quad  h_{x}=x_{i+1}-x_{i}\},
\end{equation*}
\begin{equation*}
\overline{\Omega}^{hy}=\{y_{j},\quad  j=0,1,\ldots, N_{y};\quad y_{0}=0, \quad y_{N_{y}}=1; \quad h_{y}=y_{j+1}-y_{j}\}.
\end{equation*}
For a mesh function $U(x_{i},y_{j})=(U_{1}(x_{i},y_{j}),U_{2}(x_{i},y_{j}))$, $(x_{i},y_{j}) \in \overline{\Omega}^{h}$, we use the  difference scheme
\begin{equation}\label{eq:2}
  \mathcal{L}_{\alpha,ij}U_{\alpha}(x_{i},y_{j})+f_{\alpha}(x_{i},y_{j},U)=0,\quad (x_{i},y_{j}) \in \Omega^{h},\quad \alpha=1,2,
  \end{equation}
  \begin{equation*}
 U(x_{i},y_{j})=g(x_{i},y_{j}),\quad (x_{i},y_{j})\in \partial \Omega^{h},
\end{equation*}
where $\partial \Omega^{h}$ is the boundary of the mesh  $\Omega^{h}$, and $\mathcal{L}_{\alpha,ij}U_{\alpha}(x_{i},y_{j})$, $\alpha=1,2$,  are defined by
\begin{eqnarray*}
 \mathcal{L}_{\alpha, ij}U_{\alpha}(x_{i},y_{j})&=&-\varepsilon_{\alpha}\left( D^{2}_{x}U_{\alpha}(x_{i},y_{j})+D^{2}_{y}U_{\alpha}(x_{i},y_{j})\right) \\
  &&+v_{\alpha}(x_{i},y_{j})\left(D^{1}_{x}U_{\alpha}(x_{i},y_{j})+D^{1}_{y}U_{\alpha}(x_{i},y_{j})\right).
\end{eqnarray*}
It is assumed that  $v_{\alpha}(x,y)\geq0$, $(x,y)\in\overline{\omega}$, $\alpha=1,2$.
  $D^{2}_{x}U_{\alpha}(x_{i},y_{j})$, $D^{2}_{y}U_{\alpha}(x_{i},y_{j})$ and $D^{1}_{x}U_{\alpha}(x_{i},y_{j})$, $D^{1}_{y}U_{\alpha}(x_{i},y_{j})$, $\alpha=1,2$, are, respectively, the central difference and backward difference approximations to the second and first derivatives:
\begin{equation*}
D^{2}_{x}U_{\alpha}(x_{i},y_{j}) =\frac{U_{\alpha,i-1,j}-2U_{\alpha,ij}+U_{\alpha,i+1,j}}{h_{x}^{2}},
\end{equation*}
$$
D^{2}_{y}U_{\alpha}(x_{i},y_{j}) =\frac{U_{\alpha,i,j-1}-2U_{\alpha,ij}+U_{\alpha,i,j+1}}{h_{y}^{2}},
$$
\begin{equation*}
D^{1}_{x}U_{\alpha}(x_{i},y_{j}) =\frac{U_{\alpha,ij}-U_{\alpha,i-1,j}}{h_{x}},\quad  D^{1}_{y}U_{\alpha}(x_{i},y_{j}) =\frac{U_{\alpha,ij}-U_{\alpha,i,j-1}}{h_{y}},
\end{equation*}
where $U_{\alpha,ij}\equiv U_{\alpha}(x_{i},y_{j})$.
\begin{remark}\label{remark1}
An approximation of the first derivatives $u_{x}$ and $u_{y}$ depends on the signs of $v_{\alpha}(x,y)$, $\alpha=1,2$.
When $v_{\alpha}(x,y)\leq0$, $\alpha =1,2$, then $u_{x}$ and $u_{y}$ are approximated by  forward difference formula. The first derivatives  $u_{x}$ and $u_{y}$ are approximated by  using both forward or backward difference formulae when $v_{\alpha}(x,y)$, $\alpha=1,2$, have variable signs.
\end{remark}
The vector mesh functions
\begin{equation*}
\widetilde{U}(x_{i},y_{j})=(\widetilde{U}_{1}(x_{i},y_{j}),\widetilde{U}_{2}(x_{i},y_{j})),\quad \widehat{U}(x_{i},y_{j})=(\widehat{U}_{1}(x_{i},y_{j}),\widehat{U}_{2}(x_{i},y_{j})),
\end{equation*}
$$
 (x_{i},y_{j})\in \overline{\Omega}^{h},
$$
are called ordered upper and lower solutions of (\ref{eq:2}), if they satisfy the inequalities
\begin{equation}\label{eq:3}
\widetilde{U}(x_{i},y_{j})\geq\widehat{U}(x_{i},y_{j}),\quad (x_{i},y_{j})\in \overline{\Omega}^{h},
\end{equation}
\begin{equation*}
\mathcal{L}_{\alpha,ij}\widetilde{U}_{\alpha}(x_{i},y_{j})+f_{\alpha}(x_{i},y_{j},\widetilde{U})\geq0,\quad (x_{i},y_{j})\in \Omega^{h},
\end{equation*}
\begin{equation*}
\mathcal{L}_{\alpha,ij}\widehat{U}_{\alpha}(x_{i},y_{j})+f_{\alpha}(x_{i},y_{j},\widehat{U})\leq0,\quad (x_{i},y_{j})\in \Omega^{h},
\end{equation*}
\begin{equation*}
  \widehat{U}(x_{i},y_{j})\leq g(x_{i},y_{j})\leq \widetilde{U}(x_{i},y_{j}),\quad (x_{i},y_{j})\in \partial \Omega^{h}.
\end{equation*}
For a given pair of ordered upper and lower solutions $\widetilde{U}(x_{i},y_{j})$ and $\widehat{U}(x_{i},y_{j})$, we define the sector
\begin{equation*}
  \langle \widehat{U},\widetilde{U}\rangle=\left\{U(x_{i},y_{j}):\widehat{U}(x_{i},y_{j})\leq U(x_{i},y_{j})\leq \widetilde{U}(x_{i},y_{j}),\quad (x_{i},y_{j})\in \overline{\Omega}^{h}\right\}.
\end{equation*}
We assume that on $\langle \widehat{U},\widetilde{U}\rangle$, the vector function $f(x_{i},y_{j},U)$ in (\ref{eq:2}), satisfy the constraints
\begin{equation}\label{eq:4}
  \frac{\partial f_{\alpha}(x_{i},y_{j},U)}{\partial u_{\alpha}}\leq c_{\alpha}(x_{i},y_{j}),\quad U\in \langle \widehat{U},\widetilde{U}\rangle,\quad (x_{i},y_{j})\in \overline{\Omega}^{h},\quad \alpha=1,2,
\end{equation}
\begin{equation}\label{eq:5}
-\frac{\partial f_{\alpha}(x_{i},y_{j},U)}{\partial u_{\alpha^{\prime}}}\geq0,\quad U\in \langle \widehat{U},\widetilde{U}\rangle,\quad (x_{i},y_{j})\in \overline{\Omega}^{h},\quad \alpha^{\prime}\neq \alpha,\quad \alpha=1,2,
\end{equation}
where $c_{\alpha}(x_{i},y_{j})$, $\alpha=1,2$, are non-negative bounded functions in $\overline{\Omega}^{h}$.
We say that the vector function $f(x_{i},y_{j},U)$ is quasimonotone nondecreasing on $\langle \widehat{U},\widetilde{U}\rangle$ if it satisfies (\ref{eq:5}).
\begin{remark}\label{remark2}
In this remark we discuss the mean-value theorem for vector-valued functions. Introduce the following notation:
\begin{equation}\label{eq:mvth}
\mathcal{F}_{\alpha}(x,y,u_{\alpha},u_{\alpha^{\prime}})=\left\{ \begin{array}{ll}
\mathcal{F}_{1}(x,y,u_{1},u_{2}),\quad \alpha=1,\\
\mathcal{F}_{2}(x,y,u_{1},u_{2}),\quad \alpha=2,
\end{array}\right.
\alpha\neq \alpha^{\prime}.
\end{equation}
Assume that $\mathcal{F}_{\alpha}(x,y,u_{\alpha},u_{\alpha^{\prime}})$, $\alpha=1,2$, are smooth functions, then we have
\begin{equation}\label{eq:mvth1}
   \mathcal{F}_{\alpha}(x,y,u_{\alpha},u_{\alpha^{\prime}})-\mathcal{F}_{\alpha}(x,y,w_{\alpha},u_{\alpha^{\prime}})=\frac{\partial \mathcal{F}_{\alpha}(h_{\alpha}, u_{\alpha^{\prime}})}{\partial u_{\alpha}}[u_{\alpha}-w_{\alpha}],
  \end{equation}
   \begin{equation*}
   \mathcal{F}_{\alpha}(x,y,u_{\alpha},u_{\alpha^{\prime}})-\mathcal{F}_{\alpha}(x,y,u_{\alpha},w_{\alpha^{\prime}})=\frac{\partial \mathcal{F}_{\alpha}(u_{\alpha},h_{\alpha^{\prime}})}{\partial u_{\alpha^{\prime}}}[u_{\alpha^{\prime}}-w_{\alpha^{\prime}}],
  \end{equation*}
  where $h_{\alpha}(x,y)$ lies between $u_{\alpha}(x,y)$ and $w_{\alpha}(x,y)$, and  $h_{\alpha^{\prime}}(x,y)$ lies between $u_{\alpha^{\prime}}(x,y)$ and $w_{\alpha^{\prime}}(x,y)$, $\alpha=1,2$.
\end{remark}
We introduce the notation
\begin{equation}\label{mono}
  \Gamma_{\alpha}(x_{i},y_{j},U)=c_{\alpha}(x_{i},y_{j})U_{\alpha}(x_{i},y_{j})-f_{\alpha}(x_{i},y_{j},U),\quad (x_{i},y_{j})\in \overline{\Omega}^{h} ,\ \alpha=1,2,
\end{equation}
where $c_{\alpha}(x_{i},y_{j})$, $\alpha=1,2$, are defined in (\ref{eq:4}), and give a monotone property of $\Gamma_{\alpha}$, $\alpha=1,2$.
\begin{lemma}\label{mono1}
Suppose that $U=(U_{1},U_{2})$ and $V=(V_{1},V_{2})$, are any functions in $\langle\widehat{U},\widetilde{U}\rangle$, where
 $U\geq V$, and
assume that (\ref{eq:4}) and (\ref{eq:5}) are satisfied. Then
\begin{equation}\label{mono2}
  \Gamma_{\alpha}(U)\geq \Gamma_{\alpha}(V),\quad \alpha=1,2,
\end{equation}
where $(x_{i},y_{j})$ is suppressed in (\ref{mono2}).
\end{lemma}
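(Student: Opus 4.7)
The plan is to reduce the inequality to two non-negativity claims by expanding $\Gamma_\alpha(U)-\Gamma_\alpha(V)$ and splitting the $f_\alpha$ difference along one coordinate at a time, so that each assumption in the hypothesis has a single summand to control. Concretely, I would write
\begin{equation*}
\Gamma_{\alpha}(U)-\Gamma_{\alpha}(V)=c_{\alpha}(U_{\alpha}-V_{\alpha})-\bigl[f_{\alpha}(U_{\alpha},U_{\alpha^{\prime}})-f_{\alpha}(V_{\alpha},V_{\alpha^{\prime}})\bigr],
\end{equation*}
and then insert the intermediate term $f_{\alpha}(V_{\alpha},U_{\alpha^{\prime}})$ to telescope the bracket into one piece that varies only in the $\alpha$-argument and one that varies only in the $\alpha^{\prime}$-argument.

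Next, I would apply the mean-value identities (\ref{eq:mvth1}) from Remark \ref{remark2} to each piece, obtaining some intermediate points $h_{\alpha}$, $h_{\alpha^{\prime}}$ (lying in the sector, since the sector is convex in each component) such that
\begin{equation*}
f_{\alpha}(U_{\alpha},U_{\alpha^{\prime}})-f_{\alpha}(V_{\alpha},U_{\alpha^{\prime}})=\frac{\partial f_{\alpha}(h_{\alpha},U_{\alpha^{\prime}})}{\partial u_{\alpha}}(U_{\alpha}-V_{\alpha}),
\end{equation*}
\begin{equation*}
f_{\alpha}(V_{\alpha},U_{\alpha^{\prime}})-f_{\alpha}(V_{\alpha},V_{\alpha^{\prime}})=\frac{\partial f_{\alpha}(V_{\alpha},h_{\alpha^{\prime}})}{\partial u_{\alpha^{\prime}}}(U_{\alpha^{\prime}}-V_{\alpha^{\prime}}).
\end{equation*}
Substituting these back gives
\begin{equation*}
\Gamma_{\alpha}(U)-\Gamma_{\alpha}(V)=\left[c_{\alpha}-\frac{\partial f_{\alpha}(h_{\alpha},U_{\alpha^{\prime}})}{\partial u_{\alpha}}\right](U_{\alpha}-V_{\alpha})+\left[-\frac{\partial f_{\alpha}(V_{\alpha},h_{\alpha^{\prime}})}{\partial u_{\alpha^{\prime}}}\right](U_{\alpha^{\prime}}-V_{\alpha^{\prime}}).
\end{equation*}

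Finally I would invoke the two hypotheses separately: the bracket in the first summand is non-negative by (\ref{eq:4}), the bracket in the second is non-negative by the quasimonotone condition (\ref{eq:5}), and both factors $U_{\alpha}-V_{\alpha}$ and $U_{\alpha^{\prime}}-V_{\alpha^{\prime}}$ are non-negative because $U\geq V$. Hence the right-hand side is a sum of two non-negative terms, giving (\ref{mono2}). There is no real obstacle; the only subtlety worth noting explicitly is that (\ref{eq:5}) is exactly what is needed to control the off-diagonal term in the telescoping — absent quasimonotonicity, the sign of that term could be arbitrary and the monotonicity of $\Gamma_{\alpha}$ would fail.
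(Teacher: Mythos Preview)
Your proposal is correct and matches the paper's proof almost verbatim: the paper also telescopes $f_{\alpha}(U)-f_{\alpha}(V)$ through an intermediate point, applies the mean-value identities of Remark~\ref{remark2}, and then reads off nonnegativity of each summand from (\ref{eq:4}), (\ref{eq:5}), and $U\geq V$. The only cosmetic difference is that the paper writes the split in the explicit coordinates $(u_{1},u_{2})$ rather than $(u_{\alpha},u_{\alpha^{\prime}})$.
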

\begin{proof}
  From (\ref{mono}), we have
  \begin{eqnarray}\label{mono3}
  \Gamma_{\alpha}(U)-\Gamma _{\alpha}(V)&=&c_{\alpha}(x_{i},y_{j})(U_{\alpha}(x_{i},y_{j})-V_{\alpha}(x_{i},y_{j}))\\ \nonumber
 & -& \left[f_{\alpha}(x_{i},y_{j},U_{1},U_{2})-f_{\alpha}(x_{i},y_{j},V_{1},U_{2})\right]\\ \nonumber
 & -& \left[f_{\alpha}(x_{i},y_{j},V_{1},U_{2})-f_{\alpha}(x_{i},y_{j},V_{1},V_{2})\right]. \nonumber
\end{eqnarray}
For $\alpha=1$ in (\ref{mono3}), using the mean-value theorem (\ref{eq:mvth1}), we obtain
\begin{eqnarray*}
  \Gamma_{1}(U)-\Gamma _{1}(V)&=& \left(c_{1}(x_{i},y_{j})-\frac{\partial f_{1}(Q_{1},U_{2})}{\partial u_{1}}\right)(U_{1}-V_{1})\\ &-&\frac{\partial f_{1}(V_{1},Q_{2})}{\partial u_{2}}(U_{2}-V_{2}),
\end{eqnarray*}
where
$$
V_{\alpha}(x_{i},y_{j})\leq Q_{\alpha}(x_{i},y_{j})\leq U_{\alpha}(x_{i},y_{j}),\quad (x_{i},y_{j})\in \overline{\Omega}^{h},\quad \alpha=1,2.
$$
From here, (\ref{eq:4}), (\ref{eq:5}) and taking into account that $U_{\alpha}\geq V_{\alpha}$, $\alpha=1,2$, we conclude that
$$
 \Gamma_{1}(U)-\Gamma _{1}(V)\geq0.
 $$
 Similarly, we can prove that
 $$
 \Gamma_{2}(U)-\Gamma _{2}(V)\geq0.
 $$
\end{proof}
We introduce the linear version of problem (\ref{eq:2}) in the form
\begin{equation}\label{eq:lp}
 \mathcal{L}_{\alpha,ij}W_{\alpha}(x_{i},y_{j})+c^{*}_{\alpha}(x_{i},y_{j})W_{\alpha}(x_{i},y_{j})=\Phi_{\alpha}(x_{i},y_{j}),\quad (x_{i},y_{j})\in \Omega^{h},
\end{equation}
\begin{equation*}
  W(x_{i},y_{j})=g(x_{i},y_{j}),\quad (x_{i},y_{j})\in \partial\Omega^{h},\quad \alpha=1,2,
\end{equation*}
where $W(x_{i},y_{j})\equiv(W_{1}(x_{i},y_{j}),W_{2}(x_{i},y_{j}))$, $(x_{i},y_{j})\in \overline{\Omega}^{h}$, and $c^{*}_{\alpha}(x_{i},y_{j})$, $\alpha=1,2$, are non-negative bounded functions.  We formulate the maximum principle for the difference operator $\mathcal{L}_{\alpha,ij}+c^{*}_{\alpha}(x_{i},y_{j})$,
$(x_{i},y_{j})\in \Omega^{h}$, $\alpha=1,2$, and give an estimate of the solution to (\ref{eq:lp}).
\begin{lemma}\label{lm1}
 \begin{itemize}
\item[(i)] If $W_{\alpha}(x_{i},y_{j})$, $\alpha=1,2$, satisfy the conditions
  \begin{equation*}
  \mathcal{L}_{\alpha,ij}W_{\alpha}(x_{i},y_{j})+c^{*}_{\alpha}(x_{i},y_{j})W_{\alpha}(x_{i},y_{j})\geq0 \ (\leq 0),\quad (x_{i},y_{j})\in \Omega^{h},
  \end{equation*}
  \begin{equation*}
    W_{\alpha}(x_{i},y_{j})\geq0 \ (\leq 0),\quad (x_{i},y_{j})\in \partial \Omega^{h},\quad \alpha=1,2,
  \end{equation*}
  then $W_{\alpha}(x_{i},y_{j})\geq0 \ (\leq0),\quad (x_{i},y_{j}) \in \overline{\Omega}^{h}$, $\alpha=1,2$.
\item[(ii)]
 The following estimate of the solution to (\ref{eq:lp}) holds true
  \begin{equation}\label{eq:es}
   \|W_{\alpha}\|_{\overline{\Omega}^{h}}\leq \max \left\{\|g_{\alpha}\|_{\partial \Omega^{h}}, \|\Phi_{\alpha}/c^{*}_{\alpha}\|_{\Omega^{h}}\right\},
  \end{equation}
  where
  \begin{equation*}
    \|g_{\alpha}\|_{\partial\Omega^{h}}=\max_{(x_{i},y_{j})\in\partial\Omega^{h}}|g_{\alpha}(x_{i},y_{j})|,\quad \left\| \frac{ \Phi_{\alpha}}{c^{*}_{\alpha}}\right\|_{\Omega^{h}}=\max_{(x_{i},y_{j})\in\Omega^{h}}\left| \frac{\Phi_{\alpha}(x_{i},y_{j})}{c^{*}_{\alpha}(x_{i},y_{j})}\right|.
  \end{equation*}
\end{itemize}
\end{lemma}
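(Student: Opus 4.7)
The plan is to exploit the fact that the linear system \eqref{eq:lp} decouples into two independent scalar five-point difference problems, one for each $\alpha\in\{1,2\}$: the operator $\mathcal{L}_{\alpha,ij}+c^{*}_{\alpha}$ acts only on $W_\alpha$ and there is no coupling between components. Both assertions therefore reduce to a standard scalar discrete maximum principle argument applied componentwise in $\alpha$.

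For part (i), I would first rewrite $\mathcal{L}_{\alpha,ij}W_\alpha + c^*_\alpha W_\alpha$ at an interior node in explicit five-point stencil form,
$$ a_{ij}W_{\alpha,ij} - b^{W}_{ij}W_{\alpha,i-1,j} - b^{E}_{ij}W_{\alpha,i+1,j} - b^{S}_{ij}W_{\alpha,i,j-1} - b^{N}_{ij}W_{\alpha,i,j+1}, $$
and check from the definitions of $D^{2}_{x},D^{2}_{y},D^{1}_{x},D^{1}_{y}$ that the four neighbour coefficients are non-negative; this uses $\varepsilon_\alpha>0$ together with the sign hypothesis $v_\alpha(x,y)\geq 0$, so the upwind contributions add to the diagonal rather than spoiling the off-diagonal signs. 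The diagonal satisfies $a_{ij}=b^{W}_{ij}+b^{E}_{ij}+b^{S}_{ij}+b^{N}_{ij}+c^{*}_\alpha(x_i,y_j)$. Assume, for contradiction, that $W_\alpha$ attains a strictly negative minimum at some interior node $(x_{i_0},y_{j_0})\in\Omega^h$; rearranging the inequality $\mathcal{L}_{\alpha,ij}W_\alpha+c^*_\alpha W_\alpha\geq 0$ at that node gives
$$ c^{*}_\alpha(x_{i_0},y_{j_0})\,W_{\alpha,i_0 j_0} \geq b^{W}_{i_0 j_0}\bigl(W_{\alpha,i_0-1,j_0}-W_{\alpha,i_0 j_0}\bigr) + \cdots + b^{N}_{i_0 j_0}\bigl(W_{\alpha,i_0,j_0+1}-W_{\alpha,i_0 j_0}\bigr) \geq 0, $$
contradicting $W_{\alpha,i_0 j_0}<0$ whenever $c^*_\alpha$ is strictly positive at that point. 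If $c^*_\alpha$ vanishes at the minimizer, the same inequality forces each of the four neighbour values to equal the minimum; a standard propagation along mesh lines then drags this minimum all the way to a boundary node, contradicting $W_\alpha\geq 0$ on $\partial\Omega^h$. The $(\leq 0)$ branch follows by replacing $W_\alpha$ with $-W_\alpha$.

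For part (ii), I would pass from the sign statement (i) to the $L^{\infty}$ bound by a barrier-function comparison. Set $M=\max\{\|g_\alpha\|_{\partial\Omega^h},\|\Phi_\alpha/c^*_\alpha\|_{\Omega^h}\}$. Since $\mathcal{L}_{\alpha,ij}$ annihilates constants, the mesh functions $Z^{\pm}_\alpha = M \pm W_\alpha$ satisfy
$$ \mathcal{L}_{\alpha,ij}Z^{\pm}_\alpha + c^*_\alpha Z^{\pm}_\alpha = c^*_\alpha M \pm \Phi_\alpha \geq 0 \quad\text{in } \Omega^h, $$
by the choice of $M$ together with the non-negativity of $c^*_\alpha$, and $Z^{\pm}_\alpha = M \pm g_\alpha \geq 0$ on $\partial\Omega^h$. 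Part (i) then yields $Z^{\pm}_\alpha \geq 0$, i.e.\ $|W_\alpha|\leq M$, which is precisely \eqref{eq:es}.

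The main technical subtlety I expect to dwell on is the degenerate case $c^*_\alpha\equiv 0$ at the candidate minimizer in part (i): there the purely pointwise contradiction collapses and one must invoke a discrete strong-minimum propagation along the grid to reach the boundary. Once (i) is established, part (ii) is essentially immediate, because the constants-killing property of the discrete operator makes $M$ a valid two-sided barrier.
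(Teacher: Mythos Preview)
Your argument is correct and is the standard discrete maximum-principle proof for five-point schemes. The paper itself does not prove this lemma at all; it simply refers the reader to \cite{Ab79} and \cite{sm2001}, and your stencil-based contradiction for (i) together with the constant barrier $M\pm W_\alpha$ for (ii) is precisely the classical argument found in those references. One small point worth making explicit in the propagation step: all four off-diagonal coefficients $l_{\alpha,ij},r_{\alpha,ij},b_{\alpha,ij},t_{\alpha,ij}$ are \emph{strictly} positive here (since $\varepsilon_\alpha>0$), so the equality at the minimizer forces each neighbour to share the minimum and the connectivity argument goes through without further assumptions.
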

The proof of the lemma can be found in \cite{Ab79}, \cite{sm2001}.
\section{Block monotone schemes}
Write down the difference scheme (\ref{eq:2}) at an interior mesh point $(x_{i},y_{j})\in \Omega^{h}$ in the form
\begin{eqnarray}\label{eq:nds}
d_{\alpha,ij}U_{\alpha,ij}-l_{\alpha,ij}U_{\alpha,i-1,j}&-&r_{\alpha,ij}U_{\alpha,i+1,j}-b_{\alpha,ij}U_{\alpha,i,j-1}\\&-&t_{\alpha,ij}U_{\alpha,i,j+1}= -f_{\alpha}(x_{i},y_{j},U_{1,ij},U_{2,ij})+G_{\alpha,ij}^{*},\nonumber
  \end{eqnarray}
\begin{equation*}
l_{\alpha,ij}=\frac{\varepsilon_{\alpha}}{h_{x}^{2}}+\frac{v_{\alpha}(x_{i},y_{j})}{h_{x}},\quad r_{\alpha,ij}=\frac{\varepsilon_{\alpha}}{h_{x}^{2}},
\end{equation*}
\begin{equation*}
b_{\alpha,ij}=\frac{\varepsilon_{\alpha}}{h_{y}^{2}}+\frac{v_{\alpha}(x_{i},y_{j})}{h_{y}},\quad t_{\alpha,ij}=\frac{\varepsilon_{\alpha}}{h_{y}^{2}},
\end{equation*}
\begin{equation*}
d_{\alpha,ij}=l_{\alpha,ij}+r_{\alpha,ij}+b_{\alpha,ij}+t_{\alpha,ij},\quad \alpha=1,2,
\end{equation*}
where $G_{\alpha,ij}^{*}$ is  associated with the boundary function $g_{\alpha}(x_{i},y_{j})$. Define vectors and diagonal matrices by
\begin{equation*}
  U_{\alpha,i}=(U_{\alpha,i,1}, \ldots, U_{\alpha,i,N_{y}-1})^{T},\quad G_{\alpha,i}^{*}=(G_{\alpha,i,1}^{*}, \ldots, G_{\alpha,i,N_{y}-1}^{*})^{T},
\end{equation*}
\begin{equation*}
  F_{\alpha,i}(U_{1,i},U_{2,i})=(f_{\alpha,i,1}(U_{1,i,1},U_{2,i,1}), \ldots, f_{\alpha,i,N_{y}-1}(U_{1,i,N_{y}-1},U_{2,i,N_{y}-1}))^{T},
 \end{equation*}
\begin{equation*}
  L_{\alpha,i}=\mbox{diag}(l_{\alpha,i,1},\ldots, l_{\alpha,i,N_{y}-1}),\ R_{\alpha,i}=\mbox{diag}(r_{\alpha,i,1},\ldots, r_{\alpha,i,N_{y}-1}),\ \alpha=1,2,
\end{equation*}
where $L_{\alpha,1}U_{\alpha,0}$ is included in $G_{\alpha,1}^{*}$, and  $R_{\alpha,N_{x}-1}U_{\alpha,N_{x}}$ is included in $G_{\alpha,N_{x}}^{*}$. Then the difference scheme (\ref{eq:2}) may be written in the form
\begin{equation}\label{eq:fds}
 A_{\alpha,i}U_{\alpha,i}-(L_{\alpha,i}U_{\alpha,i-1}+R_{\alpha,i}U_{\alpha,i+1})=-F_{\alpha,i}(U_{i})+G_{\alpha,i}^{*},
\end{equation}
\begin{equation*}
U_{i}=(U_{1,i},U_{2,i}),\quad i=1,2,\ldots,N_{x}-1,\quad \alpha=1,2,
\end{equation*}
with the tridiagonal matrix $A_{\alpha,i}$ in the form
$$
A_{\alpha,i}= \left[\begin{array}{ccccccc}
d_{\alpha,i,1}       &        &     -t_{\alpha,i,1}      &                 &         &         0\\   [1.5 ex]
-b_{\alpha,i,2}      &             & d_{\alpha,i,2}      &       -t_{\alpha,i,2}          &         &        \\    [1 ex]
              &               & \  \ddots   & \ \ddots        &   \ddots      &      \\    [1 ex]
              &                &           &\ \ - b_{\alpha,i,N_{y}-2}   &  \  d_{\alpha,i,N_{y}-2}     &  \  -t_{\alpha,i,N_{y}-2}      \\    [2.5 ex]
    0          &                &           &                   &     -b_{\alpha,i,N_{y}-1}                 & d_{\alpha,i,N_{y}-1}                       \\   
\end{array}\right]
.$$
Matrices $L_{\alpha,i}$ and $R_{\alpha,i}$ contain the coupling coefficients of a mesh point, respectively, to the mesh point of the left line and the mesh point of the right line.
\subsection{Block monotone Jacobi  method}
We present the  block monotone Jacobi  method for the difference scheme (\ref{eq:fds}). The upper $\{\overline{U}_{\alpha,i}^{(n)}\}$ and lower $\{\underline{U}_{\alpha,i}^{(n)}\}$, $\alpha=1,2$, sequences of solutions are calculated by the following block Jacobi iterative method
\begin{equation}\label{eq:6}
A_{\alpha,i}Z_{\alpha,i}^{(n)}+C_{\alpha,i}Z_{\alpha,i}^{(n)}=-\mathcal{K}_{\alpha,i}(U_{i}^{(n-1)}),\quad  i=1,2,\ldots,  N_{x}-1,\ \alpha=1,2,
\end{equation}
\begin{eqnarray*}
\mathcal{K}_{\alpha,i}(U_{i}^{(n-1)})&\equiv&A_{\alpha,i}U_{\alpha,i}^{(n-1)}-L_{\alpha,i}U_{\alpha,i-1}^{(n-1)}\\ &-&R_{\alpha,i}U_{\alpha,i+1}^{(n-1)} +F_{\alpha,i}(U_{i}^{(n-1)})-G_{\alpha,i}^{*},
\end{eqnarray*}
\begin{equation*}
Z_{\alpha,i}^{(n)}=\left\{ \begin{array}{ll}
g_{\alpha,i}-U_{\alpha,i}^{(0)}, \quad n=1  ,\\
\mathbf{0}\quad \quad \quad \quad ,\quad  n\geq2,
\end{array}\right.
\quad i=0,N_{x},
 \end{equation*}
\begin{equation*}
Z_{\alpha,i}^{(n)}=U_{\alpha,i}^{(n)}-U_{\alpha,i}^{(n-1)},
\end{equation*}
where $U_{i}^{(n-1)}=(U_{1,i}^{(n-1)},U_{2,i}^{(n-1)})$, and  $\mathcal{K}_{\alpha,i}(U_{i}^{(n-1)})$, $\alpha=1,2$, are the residuals of the difference equations (\ref{eq:fds}) on $U_{\alpha,i}^{(n-1)}$, $\alpha=1,2$, and $\mathbf{0}$ is the  $(N_{y}-1)\times 1$ zero vector. Matrix $C_{\alpha,i}$ is the diagonal matrix  $\mbox{diag}(c_{\alpha,i,1}, \ldots, c_{\alpha,i,N_{y}-1})$, where $c_{\alpha,i,j}$, $(i,j)\in \Omega^{h}$, $\alpha=1,2$, are defined in (\ref{eq:4}).
\begin{remark}\label{remark3}
 Similar to Remark \ref{remark2}, we discuss the mean-value theorem for vector-functions. Introduce the following notations:
\begin{equation}\label{eq:mvth2}
F_{\alpha,i}(U_{\alpha,i},U_{\alpha^{\prime},i})=\left\{ \begin{array}{ll}
F_{1,i}(U_{1,i},U_{2,i}),\quad \alpha=1,\\
F_{2,i}(U_{1,i},U_{2,i}),\quad \alpha=2,
\end{array}\right.
\alpha\neq \alpha^{\prime},\quad i=0,1,\ldots,N_{x}.
\end{equation}
Assume that $F_{\alpha,i}(U_{\alpha,i},U_{\alpha^{\prime},i})$, $i=0,1,\ldots,N_{x}$, $\alpha\neq \alpha^{\prime}$, $\alpha=1,2$, are smooth functions, then we have
  \begin{equation}\label{eq:mvth3}
  F_{\alpha,i}(U_{\alpha,i},U_{\alpha^{\prime},i})-F_{\alpha,i}(V_{\alpha,i},U_{\alpha^{\prime},i})= \frac{\partial F_{\alpha,i}(Y_{\alpha,i}, U_{\alpha^{\prime},i})}{\partial u_{\alpha}}[U_{\alpha,i}-V_{\alpha,i}],
 \end{equation}
   \begin{equation*}
   F_{\alpha,i}(U_{\alpha,i},U_{\alpha^{\prime},i})-F_{\alpha,i}(U_{\alpha,i},V_{\alpha^{\prime},i})=\frac{\partial F_{\alpha,i}(U_{\alpha,i},Y_{\alpha^{\prime},i})}{\partial  u_{\alpha^{\prime}}}[U_{\alpha^{\prime},i}-V_{\alpha^{\prime},i}],
  \end{equation*}
  where $Y_{\alpha,i}$ lies between $U_{\alpha,i}$ and $V_{\alpha,i}$, and  $Y_{\alpha^{\prime},i}$ lies between $U_{\alpha^{\prime},i}$ and $V_{\alpha^{\prime},i}$, $i=0,1,\ldots,N_{x}$, $\alpha=1,2$.
  The partial derivatives $\frac{\partial F_{\alpha,i}}{\partial u_{\alpha}}$ and $\frac{\partial F_{\alpha,i}}{\partial u_{\alpha^{\prime}}}$, are the diagonal matrices
 \begin{eqnarray*}
&& \frac{\partial F_{\alpha,i}}{\partial u_{\alpha}}= \\ && \mbox{diag} \left(\frac{\partial f_{\alpha,i,1}}{\partial u_{\alpha}}(Y_{\alpha,i,1}, U_{\alpha^{\prime},i,1}), \ldots, \frac{\partial f_{\alpha,i,N_{y}-1}}{\partial u_{\alpha}}(Y_{\alpha,i,N_{y}-1}, U_{\alpha^{\prime},i,N_{y}-1})\right),
 \end{eqnarray*}
\begin{eqnarray*}
&& \frac{\partial F_{\alpha,i}}{\partial u_{\alpha^{\prime}}}= \\ && \mbox{diag} \left(\frac{\partial f_{\alpha,i,1}}{\partial u_{\alpha^{\prime}}}( U_{\alpha,i,1},Y_{\alpha^{\prime},i,1}), \ldots, \frac{\partial f_{\alpha,i,N_{y}-1}}{\partial u_{\alpha^{\prime}}}(U_{\alpha,i,N_{y}-1},Y_{\alpha^{\prime},i,N_{y}-1})\right).
 \end{eqnarray*}
\end{remark}
\begin{theorem}\label{mc}
Assume that $f_{\alpha}(x_{i},y_{j},U)$, $\alpha=1,2$, satisfy (\ref{eq:4}) and (\ref{eq:5}). Let $\widetilde{U}=(\widetilde{U}_{1},\widetilde{U}_{2})$ and $\widehat{U}=(\widehat{U}_{1},\widehat{U}_{2})$ be, respectively, ordered upper and lower solutions of (\ref{eq:2}). Then the upper $\{\overline{U}_{\alpha,i}^{(n)}\}$ and lower $\{\underline{U}_{\alpha,i}^{(n)}\}$, $i=0,1,\ldots,N_{x}$, $\alpha=1,2$, sequences  generated by (\ref{eq:6}), with $\overline{U}^{(0)}=\widetilde{U}$ and $\underline{U}^{(0)}=\widehat{U}$, converge monotonically, respectively, from above to a maximal solution $\overline{U}$ and from below to a minimal solution $\underline{U}$, such that,
\begin{equation}\label{eq:m}
   \underline{U}_{\alpha,i}^{(n-1)}\leq \underline{U}_{\alpha,i}^{(n)} \leq \underline{U}_{\alpha,i}\leq \overline{U}_{\alpha,i}\leq \overline{U}_{\alpha,i}^{(n)}\leq \overline{U}_{\alpha,i}^{(n-1)},\ i=0,1, \ldots, N_{x}, \ \alpha=1,2.
  \end{equation}
  If   $S=(S_{1},S_{2})$ is any other solution  in $\langle \widehat{U},\widetilde{U}\rangle$, then
  \begin{equation}\label{eq:un}
  \underline{U}\leq S \leq \overline{U}, \quad \mbox{in} \quad \overline{\Omega}^{h}.
 \end{equation}
\end{theorem}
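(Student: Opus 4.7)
The overall plan is to transcribe the continuous-case argument of Theorem~\ref{contth1} to the discrete block setting, using Lemma~\ref{mono1} (monotonicity of $\Gamma_\alpha$) and the discrete maximum principle of Lemma~\ref{lm1}(i) as the two main tools. Throughout, I would treat the two components $\alpha=1,2$ in parallel; the quasimonotonicity condition (\ref{eq:5}) is what permits a single maximum-principle step to handle both equations at once. Before the induction, I would recast (\ref{eq:6}) in the equivalent ``fixed-point'' form
\begin{equation*}
(A_{\alpha,i}+C_{\alpha,i})U_{\alpha,i}^{(n)} = L_{\alpha,i}U_{\alpha,i-1}^{(n-1)} + R_{\alpha,i}U_{\alpha,i+1}^{(n-1)} + \Gamma_{\alpha,i}(U_i^{(n-1)}) + G_{\alpha,i}^{*},
\end{equation*}
obtained by substituting $Z_{\alpha,i}^{(n)}=U_{\alpha,i}^{(n)}-U_{\alpha,i}^{(n-1)}$ and collecting $C_{\alpha,i}U_{\alpha,i}^{(n-1)}-F_{\alpha,i}(U_i^{(n-1)})=\Gamma_{\alpha,i}(U_i^{(n-1)})$ via (\ref{mono}). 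Each column matrix $A_{\alpha,i}+C_{\alpha,i}$ is an irreducibly diagonally dominant tridiagonal $M$-matrix, so it enjoys the inverse-positivity expressed in Lemma~\ref{lm1}(i) restricted to a column.

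Next I would prove the sandwich
\begin{equation*}
\underline{U}^{(n-1)}\leq \underline{U}^{(n)} \leq \overline{U}^{(n)} \leq \overline{U}^{(n-1)},\quad n\geq 1,
\end{equation*}
by induction on $n$. For $n=1$ the hypothesis that $\overline{U}^{(0)}=\widetilde{U}$ is an upper solution gives $\mathcal{K}_{\alpha,i}(\overline{U}^{(0)})\geq 0$ at interior points and $Z_{\alpha,i}^{(1)}=g_\alpha-\widetilde{U}_{\alpha,i}\leq 0$ at $i\in\{0,N_x\}$; Lemma~\ref{lm1}(i) then yields $Z^{(1)}\leq 0$, i.e.\ $\overline{U}^{(1)}\leq \overline{U}^{(0)}$, and the symmetric argument gives $\underline{U}^{(1)}\geq \underline{U}^{(0)}$. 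For the inductive step on the upper sequence, set $W_{\alpha,i}=\overline{U}_{\alpha,i}^{(n-1)}-\overline{U}_{\alpha,i}^{(n)}$ and subtract consecutive fixed-point equations to obtain
\begin{equation*}
(A_{\alpha,i}+C_{\alpha,i})W_{\alpha,i} = L_{\alpha,i}\bigl[\overline{U}_{\alpha,i-1}^{(n-2)}-\overline{U}_{\alpha,i-1}^{(n-1)}\bigr] + R_{\alpha,i}\bigl[\overline{U}_{\alpha,i+1}^{(n-2)}-\overline{U}_{\alpha,i+1}^{(n-1)}\bigr] + \bigl[\Gamma_{\alpha,i}(\overline{U}_i^{(n-2)})-\Gamma_{\alpha,i}(\overline{U}_i^{(n-1)})\bigr].
\end{equation*}
The $L$- and $R$-brackets are nonnegative by the induction hypothesis (and $L_{\alpha,i},R_{\alpha,i}$ have nonnegative entries), while the $\Gamma$-bracket is nonnegative by Lemma~\ref{mono1}. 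Since $W$ has zero boundary data for $n\geq 2$, Lemma~\ref{lm1}(i) gives $W\geq 0$. The inequality $\underline{U}^{(n-1)}\leq \underline{U}^{(n)}$ is handled symmetrically, and the middle inequality $\underline{U}^{(n)}\leq \overline{U}^{(n)}$ is obtained by applying the same subtraction to $\overline{U}^{(n)}-\underline{U}^{(n)}$ and using $\overline{U}^{(n-1)}\geq \underline{U}^{(n-1)}$ together with Lemma~\ref{mono1}.

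Having the monotone chain, each sequence is monotone and bounded on a finite mesh, hence converges pointwise to limits $\overline{U}$ and $\underline{U}$; passing to the limit in (\ref{eq:6}) using continuity of $F_{\alpha,i}$ forces $\mathcal{K}_{\alpha,i}(\overline{U})=\mathcal{K}_{\alpha,i}(\underline{U})=0$, so both limits solve (\ref{eq:fds}). The bound (\ref{eq:un}) then follows from one more induction: for any solution $S\in\langle\widehat{U},\widetilde{U}\rangle$ we have $\mathcal{K}_{\alpha,i}(S)=0$, and the same subtract-and-max-principle argument applied to $S-\underline{U}^{(n)}$ and $\overline{U}^{(n)}-S$ propagates $\underline{U}^{(n)}\leq S\leq\overline{U}^{(n)}$ through the iteration and into the limit. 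The principal technical hurdle is the coupling: the $\Gamma$-difference contains a cross-component piece whose sign is controlled only by (\ref{eq:5}), so Lemma~\ref{mono1} must be invoked at every step and the three inequalities in (\ref{eq:m}) must be proved in lock-step rather than separately, since each one feeds into the next via the hypotheses of Lemma~\ref{mono1}.
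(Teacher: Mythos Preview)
Your plan is correct and follows the same skeleton as the paper's proof: the $n=1$ step via the inverse-positivity of $A_{\alpha,i}+C_{\alpha,i}$, the sandwich $\underline{U}^{(n)}\le\overline{U}^{(n)}$, induction, and passage to the limit. Two packaging differences are worth noting. First, where you invoke Lemma~\ref{mono1} on the $\Gamma$-bracket, the paper instead unwraps the mean-value theorem (Remark~\ref{remark3}) each time and checks (\ref{eq:4})--(\ref{eq:5}) on the intermediate points directly; your route is tidier but identical in content. Second, your inductive step subtracts two consecutive fixed-point equations, whereas the paper shows that $\overline{U}^{(1)}$ is again an upper solution (by computing $\mathcal{K}_{\alpha,i}(\overline{U}^{(1)})\ge\mathbf{0}$) and then reuses the $n=1$ argument; both work, and both require the middle inequality $\underline{U}^{(n)}\le\overline{U}^{(n)}$ to keep iterates in the sector, exactly as you flag in your ``lock-step'' remark. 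For (\ref{eq:un}) the paper takes a slightly different tack---it restarts the iteration on the sector $\langle S,\widetilde{U}\rangle$ (resp.\ $\langle\widehat{U},S\rangle$) and observes that the constant sequence $\{S\}$ is the lower (resp.\ upper) sequence---but your direct subtraction argument on $\overline{U}^{(n)}-S$ and $S-\underline{U}^{(n)}$ is equally valid and perhaps more transparent. One small caution: Lemma~\ref{lm1}(i) is stated for the full two-dimensional operator, so when you write ``Lemma~\ref{lm1}(i) restricted to a column'' you are really appealing to the $M$-matrix property $(A_{\alpha,i}+C_{\alpha,i})^{-1}\ge O$, which the paper invokes explicitly; make that substitution in the write-up.
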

\begin{proof}
Since $\overline{U}^{(0)}$ is an initial upper solution (\ref{eq:3}), from (\ref{eq:6}), we have
\begin{equation*}
 (A_{\alpha,i}+C_{\alpha,i})\overline{Z}_{\alpha,i}^{(1)}\leq\mathbf{0},\quad i=1,2,\ldots, N_{x}-1,
\end{equation*}
\begin{equation*}
  \overline{Z}_{\alpha,i}^{(1)}\leq\mathbf{0},\quad i=0,N_{x},\quad \alpha=1,2.
\end{equation*}
Taking into account that $d_{\alpha,ij}>0$, $b_{\alpha,ij}$, $t_{\alpha,ij}\geq0$, $\alpha=1,2$, in (\ref{eq:nds}) and $A_{\alpha,i}$ are strictly diagonal dominant matrix, we conclude that  $A_{\alpha,i}$, $i=1,2,\ldots,N_{x}-1$, $\alpha=1,2$, are $M$-matrices  and $A_{\alpha,i}^{-1}\geq\emph{O}$ \cite{Varga2000},   which leads to $(A_{\alpha,i}+C_{\alpha,i})^{-1}\geq\emph{O}$, where $\emph{O}$ is the ($N_{y}-1)\times (N_{y}-1$) null  matrix. From here, we obtain
\begin{equation}\label{eq:7}
\overline{Z}_{\alpha,i}^{(1)}\leq \mathbf{0},\quad i=0,1 \ldots, N_{x},\quad \alpha=1,2.
\end{equation}
Similarly, we prove that
\begin{equation}\label{eq:8}
\underline{Z}_{\alpha,i}^{(1)}\geq\mathbf{0},\quad i=0,1, \ldots, N_{x},\quad \alpha=1,2.
\end{equation}
We now prove that
\begin{equation}\label{eq:1,1}
  \underline{U}_{\alpha,i}^{(1)}\leq \overline{U}_{\alpha,i}^{(1)},\quad i=0,1,\ldots,N_{x},\quad \alpha=1,2.
\end{equation}
 Letting $W_{\alpha,i}^{(n)}=\overline{U}_{\alpha,i}^{(n)}-\underline{U}_{\alpha,i}^{(n)}$, from (\ref{eq:6}) for $\alpha=1$, we have
\begin{eqnarray*}
  (A_{1,i}+C_{1,i})W_{1,i}^{(1)}&=&C_{1,i}W_{1,i}^{(0)}+L_{1,i}W_{1,i-1}^{(0)}+R_{1,i}W_{1,i+1}^{(0)}\\ &-&\left[F_{1,i}(\overline{U}_{1,i}^{(0)},\overline{U}_{2,i}^{(0)})-F_{1,i}(\underline{U}_{1,i}^{(0)},\overline{U}_{2,i}^{(0)})\right]\\
  &-& \left[F_{1,i}(\underline{U}_{1,i}^{(0)},\overline{U}_{2,i}^{(0)})-F_{1,i}(\underline{U}_{1,i}^{(0)},\underline{U}_{2,i}^{(0)})\right],
\end{eqnarray*}
\begin{equation*}
i=1,2, \ldots, N_{x}-1,\quad  W_{1,i}^{(1)}=\mathbf{0},\quad i=0, N_{x}.
\end{equation*}
By the mean-value theorem (\ref{eq:mvth3}), we have
\begin{eqnarray*}
&&F_{1,i}(\overline{U}_{1,i}^{(0)},\overline{U}_{2,i}^{(0)})-F_{1,i}(\underline{U}_{1,i}^{(0)},\overline{U}_{2,i}^{(0)})=\\
&&\left(f_{1,i,1}(\overline{U}^{(0)}_{1,i,1},\overline{U}^{(0)}_{2,i,1}),\ldots,f_{1,i,N_{y}-1}(\overline{U}^{(0)}_{1,i,N_{y}-1},\overline{U}^{(0)}_{2,i,N_{y}-1})\right)^{T}-\\
&&\left(f_{1,i,1}(\underline{U}^{(0)}_{1,i,1},\overline{U}^{(0)}_{2,i,1}),\ldots,f_{1,i,N_{y}-1}(\underline{U}^{(0)}_{1,i,N_{y}-1},\overline{U}^{(0)}_{2,i,N_{y}-1})\right)^{T}=\
\end{eqnarray*}
$$
\frac{\partial F_{1,i}(Q_{1,i}^{(0)},\overline{U}_{2,i}^{(0)})}{\partial u_{1}}\left[\overline{U}_{1,i}^{(0)}-\underline{U}_{1,i}^{(0)}\right],
$$
\begin{eqnarray*}
&&F_{1,i}(\underline{U}_{1,i}^{(0)},\overline{U}_{2,i}^{(0)})-F_{1,i}(\underline{U}_{1,i}^{(0)},\underline{U}_{2,i}^{(0)})=\\
&&\left(f_{1,i,1}(\underline{U}^{(0)}_{1,i,1},\overline{U}^{(0)}_{2,i,1}),\ldots,f_{1,i,N_{y}-1}(\underline{U}^{(0)}_{1,i,N_{y}-1},\overline{U}^{(0)}_{2,i,N_{y}-1})\right)^{T}-\\
&&\left(f_{1,i,1}(\underline{U}^{(0)}_{1,i,1},\underline{U}^{(0)}_{2,i,1}),\ldots,f_{1,i,N_{y}-1}(\underline{U}^{(0)}_{1,i,N_{y}-1},\underline{U}^{(0)}_{2,i,N_{y}-1})\right)^{T}=\
\end{eqnarray*}
$$
\frac{\partial F_{1,i}(\underline{U}_{1,i}^{(0)},Q_{2,i}^{(0)})}{\partial u_{2}}\left[\overline{U}_{2,i}^{(0)}-\underline{U}_{2,i}^{(0)}\right],
$$
where $\underline{U}_{\alpha,i}^{(0)}\leq Q_{\alpha,i}^{(0)}\leq \overline{U}_{\alpha,i}^{(0)}$, $i=0,1,\ldots,N_{x}$, $\alpha=1,2$, and
\begin{eqnarray*}
&&\frac{\partial F_{1,i}(Q_{1,i}^{(0)},\overline{U}_{2,i}^{(0)})}{\partial u_{1}}=\\ &&\mbox{diag} \left(\frac{\partial f_{1,i,1}}{\partial u_{1}}(Q_{1,i,1}^{(0)},\overline{U}_{2,i,1}^{(0)}),\ldots,\frac{\partial f_{1,i,N_{y}-1}}{\partial u_{1}}(Q_{1,i,N_{y}-1}^{(0)},\overline{U}_{2,i,N_{y}-1}^{(0)}) \right),
\end{eqnarray*}
\begin{eqnarray*}
&&\frac{\partial F_{1,i}(\overline{U}_{1,i}^{(0)},Q_{2,i}^{(0)})}{\partial u_{2}}= \\ &&\mbox{diag} \left(\frac{\partial f_{1,i,1}}{\partial u_{2}}(\overline{U}_{1,i,1}^{(0)},Q_{2,i,1}^{(0)}),\ldots,\frac{\partial f_{1,i,N_{y}-1}}{\partial u_{2}}(\overline{U}_{1,i,N_{y}-1}^{(0)},Q_{2,i,N_{y}-1}^{(0)})\right).
\end{eqnarray*}
From here, (\ref{eq:4}), (\ref{eq:5}) and taking into account that  $\underline{U}^{(0)}\leq \overline{U}^{(0)}$ in $\overline{\Omega}^{h}$, $L_{\alpha,i}\geq \emph{O}$, $R_{\alpha,i}\geq \emph{O}$,
$i=1,2,\ldots,N_{x}-1$, $\alpha=1,2$, we obtain
$$
(A_{1,i}+C_{1,i})W_{1,i}^{(1)}\geq\mathbf{0},\quad i=1,2, \ldots, N_{x}-1,
$$
\begin{equation*}
W_{1,i}^{(1)}=\mathbf{0},\quad i=0, N_{x}.
\end{equation*}
From here and  $(A_{\alpha,i}+C_{\alpha,i})^{-1}\geq\emph{O}$, $i=1,2, \ldots, N_{x}-1$, $\alpha=1,2$, we obtain
\begin{equation*}
 W_{1,i}^{(1)}\geq\mathbf{0},\quad i=0,1, \ldots, N_{x}.
 \end{equation*}
Similarly, we can prove that
\begin{equation*}
 W_{2,i}^{(1)}\geq\mathbf{0},\quad i=0,1,\ldots, N_{x}.
 \end{equation*}
We now prove that $\overline{U}_{\alpha,i}^{(1)}$ and $\underline{U}_{\alpha,i}^{(1)}$, $i=0,1, \ldots, N_{x}$, $\alpha=1,2$, are, respectively, upper and lower solutions to (\ref{eq:2}).  For $\alpha=1$, from (\ref{eq:6}) and using the mean-value theorem (\ref{eq:mvth3}), we obtain
\begin{eqnarray}\label{eq:R}
\mathcal{K}_{1,i}(\overline{U}_{i}^{(1)})&=&-\left(C_{1,i}-\frac{\partial F_{1,i}(\overline{E}^{(1)}_{1,i},\overline{U}^{(0)}_{2,i})}{\partial u_{1}}\right) \overline{Z}_{1,i}^{(1)}\\&+&\frac{\partial F_{1,i}(\overline{U}^{(0)}_{1,i},\overline{E}^{(1)}_{2,i})}{\partial u_{2}}\overline{Z}_{2,i}^{(1)}\nonumber
-L_{1,i}\overline{Z}_{1,i-1}^{(1)}-R_{1,i}\overline{Z}_{1,i+1}^{(1)},
\end{eqnarray}
$$
i=1,2, \ldots, N_{x}-1,
$$
where
 $$
\overline{U}^{(1)}_{\alpha,i}\leq\overline{E}^{(1)}_{\alpha,i}\leq \overline{U}^{(0)}_{\alpha,i},\quad i=0,1,\ldots,N_{x},\quad \alpha=1,2.
$$
 From (\ref{eq:7}), (\ref{eq:8}) and (\ref{eq:1,1}), we conclude that $\frac{\partial F_{1,i}}{\partial u_{1}}$ and $\frac{\partial F_{1,i}}{\partial u_{2}}$ satisfy
 (\ref{eq:4}) and (\ref{eq:5}). From (\ref{eq:4}), (\ref{eq:5}), (\ref{eq:7}) and taking into account that $L_{\alpha,i}\geq\emph{O}$, $R_{\alpha,i}\geq \emph{O}$, $i=1,2,\ldots,N_{x}-1$, $\alpha=1,2 $, we conclude that
 \begin{equation}\label{eq:k}
 \mathcal{K}_{1,i}(\overline{U}^{(1)}_{i})\geq \mathbf{0},\quad i=1,2,\ldots,N_{x}-1.
 \end{equation}
 Similarly, we conclude that
  \begin{equation}\label{eq:k1}
 \mathcal{K}_{2,i}(\overline{U}^{(1)}_{i})\geq \mathbf{0}, \quad i=1,2,\ldots,N_{x}-1.
 \end{equation}
 From (\ref{eq:3}), (\ref{eq:k}) and (\ref{eq:k1}), we conclude that $\overline{U}^{(1)}_{i}$, $i=0,1,\ldots,N_{x}$, is an upper solution to (\ref{eq:2}). By following a similar argument, we prove that
 $$
 \mathcal{K}_{1,i}(\underline{U}^{(1)}_{i})\leq \mathbf{0},\quad \mathcal{K}_{2,i}(\underline{U}^{(1)}_{i})\leq \mathbf{0}, \quad i=1,2,\ldots,N_{x}-1.
 $$
 By induction on $n$, we can prove that the sequences  $\{\overline{U}^{(n)}_{\alpha,i}\}$, $\{\underline{U}^{(n)}_{\alpha,i}\}$, $i=0,1,\ldots,N_{x}$, $\alpha=1,2$, are, respectively, monotone decreasing upper and monotone
 increasing lower sequences of solutions.

Now we prove that the limiting functions of the upper $\{\overline{U}_{\alpha,i}^{(n)}\}$ and lower $\{\underline{U}_{\alpha,i}^{(n)}\}$, $i=0,1,\ldots,N_{x}$, $\alpha=1,2$, sequences are, respectively,  maximal and minimal solutions  of (\ref{eq:2}).
From (\ref{eq:m}), we conclude that $\lim \overline{U}_{\alpha,i}^{(n)}=\overline{U}_{\alpha,i}$, $i=0,1,\ldots,N_{x}$, $\alpha=1,2$,   as $n\rightarrow \infty$ exists and
  \begin{equation}\label{eq:9}
  \lim_{n\rightarrow \infty}\overline{Z}_{\alpha,i}^{(n)}=\mathbf{0},\quad i=0,1, \ldots, N_{x},\quad \alpha=1,2.
  \end{equation}
  Similar to (\ref{eq:R}), we have
  \begin{eqnarray}\label{eq:Rn}
\mathcal{K}_{\alpha,i}(\overline{U}_{i}^{(n)})&=&-\left(C_{\alpha,i}-\frac{\partial F_{\alpha,i}(\overline{E}^{(n)}_{\alpha,i},\overline{U}^{(n-1)}_{\alpha^{\prime},i})}{\partial u_{\alpha}}\right) \overline{Z}_{\alpha,i}^{(n)} \\ &+&  \frac{\partial F_{\alpha,i}(\overline{U}^{(n-1)}_{\alpha,i},\overline{E}^{(n)}_{\alpha^{\prime},i})}{\partial u_{\alpha^{\prime}}}\overline{Z}_{\alpha^{\prime},i}^{(n)}
-L_{\alpha,i}\overline{Z}_{\alpha,i-1}^{(n)}-R_{\alpha,i}\overline{Z}_{\alpha,i+1}^{(n)}, \nonumber
\end{eqnarray}
$$
  i=1,2, \ldots, N_{x}-1,\quad \alpha=1,2,
$$
where
$$\overline{U}_{\alpha,i}^{(n)}\leq \overline{E}^{(n)}_{\alpha,i}\leq \overline{U}_{\alpha,i}^{(n-1)},\quad i=0,1,\ldots,N_{x},\quad \alpha=1,2.
$$
By taking the limit of both sides of  (\ref{eq:Rn}) and using (\ref{eq:9}), it follows that
$$\mathcal{K}_{\alpha,i}(\overline{U}_{i})=\mathbf{0},\quad i=1,2,\ldots,N_{x}-1,\quad \alpha=1,2,$$
which means that $\overline{U}_{i}$, $i=0,1,\ldots,N_{x}$, is a maximal solution to the nonlinear difference scheme (\ref{eq:2}). In similar manner, we can prove that
 $$\mathcal{K}_{\alpha,i}(\underline{U}_{i}^{(n)})=\mathbf{0},\quad i=1,2,\ldots,N_{x}-1,\quad \alpha=1,2,$$
 which means that $(\underline{U}_{i})$, $i=0,1,\ldots,N_{x}$, is a minimal solution to the nonlinear difference scheme (\ref{eq:2}).

 Now, we prove (\ref{eq:un}). We assume that  $S=(S_{1},S_{2})$ is another solution in $\langle\widehat{U},\widetilde{U}\rangle$.  We consider the sector $\langle S, \widetilde{U}\rangle$, which means that we  treat  $S$ as a lower solution. Since $\{\underline{S}^{(n)}\}=\{S\}$ is a constant sequence for all $n$, then from (\ref{eq:m}), we conclude that $\widetilde{U}_{\alpha,i}\geq S_{\alpha,i}$, $i=0,1,\ldots,N_{x}$, $\alpha=1,2$. Now,  we consider the sector $ \langle \widehat{U},S\rangle$, which means that we treat  $S$ as an upper solution. Similarly,  since $\{\overline{S}^{(n)}\}=\{S\}$ is a constant sequence for all n, then from (\ref{eq:m}), we conclude that $\widehat{U}_{\alpha,i}\leq S_{\alpha,i}$, $i=0,1,\ldots,N_{x}$, $\alpha=1,2$. Thus, we prove (\ref{eq:un}).
 \end{proof}
\subsection{Block monotone Gauss--Seidel  method}
We introduce the block monotone Gauss--Seidel  method for the nonlinear difference scheme (\ref{eq:fds}). The iterative sequences $\{\overline{U}_{\alpha,i}^{(n)}\}$, $\{\underline{U}^{(n)}_{\alpha,i}\}$,
$i=0,1,\ldots,N_{x}$, $\alpha=1,2$, are calculated by using the block Gauss--Seidel iterative method
\begin{equation}\label{eq:10}
A_{\alpha,i}Z_{\alpha,i}^{(n)}-L_{\alpha,i}Z^{(n)}_{\alpha,i-1}+C_{\alpha,i}Z_{\alpha,i}^{(n)}=-\mathcal{K}_{\alpha,i}(U_{i}^{(n-1)}),\quad  i=1,\ldots,  N_{x}-1,
\end{equation}
\begin{eqnarray*}
\mathcal{K}_{\alpha,i}(U_{i}^{(n-1)})&=&A_{\alpha,i}U_{\alpha,i}^{(n-1)}-L_{\alpha,i}U_{\alpha,i-1}^{(n-1)}\\ &-&R_{\alpha,i}U_{\alpha,i+1}^{(n-1)} +F_{\alpha,i}(U_{i}^{(n-1)})-G_{\alpha,i}^{*},
\end{eqnarray*}
\begin{equation*}
Z_{\alpha,i}^{(n)}=\left\{ \begin{array}{ll}
g_{\alpha,i}-U_{\alpha,i}^{(0)}, \quad n=1  ,\\
\mathbf{0},\quad \quad \quad \quad n\geq2,
\end{array}\right.
\quad i=0,N_{x},
 \end{equation*}
\begin{equation*}
Z_{\alpha,i}^{(n)}=U_{\alpha,i}^{(n)}-U_{\alpha,i}^{(n-1)},\quad \alpha=1,2,
\end{equation*}
where $U_{i}^{(n)}=(U_{1,i}^{(n)},U_{2,i}^{(n)})$, and  $\mathcal{K}_{\alpha,i}(U_{i}^{(n-1)})$, $\alpha=1,2$, are the residuals of the difference equations (\ref{eq:fds}) on $U_{\alpha,i}^{(n-1)}$, $\alpha=1,2$, and $\mathbf{0}$ is the  $(N_{y}-1)\times 1$ zero vector. Matrix $C_{\alpha,i}$ is the diagonal matrix  $\mbox{diag}(c_{\alpha,i,1}, \ldots, c_{\alpha,i,N_{y}-1})$, where $c_{\alpha,i,j}$, $(i,j)\in \Omega^{h}$, $\alpha=1,2$, are defined in (\ref{eq:4}).
\begin{remark}
In this remark, we discuss the implementation of the block Gauss-Seidel iterative method.
\begin{itemize}
  \item [(i)] If $v_{\alpha}(x,y)\equiv0$, $ \alpha=1,2$, in (\ref{cont1a}), then we can start the block Gauss-Seidel method implementation from either $i=0$ or $i=N_{x}$, that is, we start either from the left or the right.
  \item [(ii)] If $v_{\alpha}(x,y)\geq0$, $\alpha=1,2$, in (\ref{cont1a}), then we  start the block Gauss-Seidel method implementation from $i=0$, that is, we start from the left.
  \item [(iii)] If $v_{\alpha}(x,y)\leq0$, $\alpha=1,2$, in (\ref{cont1a}), then we  start the block Gauss-Seidel method implementation from $i=N_{x}$, that is, we start from the right. In this case, the block
  Gauss-Seidel iterative method (\ref{eq:10}) can be written in the form
  \begin{equation*}
A_{\alpha,i}Z_{\alpha,i}^{(n)}-R_{\alpha,i}Z^{(n)}_{\alpha,i+1}+C_{\alpha,i}Z_{\alpha,i}^{(n)}=-\mathcal{K}_{\alpha,i}(U_{i}^{(n-1)}),\quad  i=1,\ldots,  N_{x}-1,
\end{equation*}
\begin{eqnarray*}
\mathcal{K}_{\alpha,i}(U_{i}^{(n-1)})&=&A_{\alpha,i}U_{\alpha,i}^{(n-1)}-L_{\alpha,i}U_{\alpha,i-1}^{(n-1)}\\ &-&R_{\alpha,i}U_{\alpha,i+1}^{(n-1)} +F_{\alpha,i}(U_{i}^{(n-1)})-G_{\alpha,i}^{*},
\end{eqnarray*}
\begin{equation*}
Z_{\alpha,i}^{(n)}=\left\{ \begin{array}{ll}
g_{\alpha,i}-U_{\alpha,i}^{(0)}, \quad n=1  ,\\
\mathbf{0},\quad \quad \quad \quad n\geq2,
\end{array}\right.
\quad i=0,N_{x},\quad \alpha=1,2.
 \end{equation*}
  \end{itemize}
\end{remark}
\begin{theorem}\label{mc1}
Assume that $f_{\alpha}(x_{i},y_{j},U)$, $\alpha=1,2$, satisfy (\ref{eq:4}) and (\ref{eq:5}). Let $\widetilde{U}=(\widetilde{U}_{1},\widetilde{U}_{2})$ and $\widehat{U}=(\widehat{U}_{1},\widehat{U}_{2})$ be, respectively, ordered upper and lower solutions of (\ref{eq:2}). Then the upper $\{\overline{U}_{\alpha,i}^{(n)}\}$ and lower $\{\underline{U}_{\alpha,i}^{(n)}\}$, $i=0,1,\ldots,N_{x}$, $\alpha=1,2$, sequences  generated by (\ref{eq:10}), with $\overline{U}^{(0)}=\widetilde{U}$ and $\underline{U}^{(0)}=\widehat{U}$, converge monotonically, respectively, from above to a maximal solution $\overline{U}$ and from below to a minimal solution $\underline{U}$, such that,
\begin{equation}\label{eq:m1}
   \underline{U}_{\alpha,i}^{(n-1)}\leq \underline{U}_{\alpha,i}^{(n)} \leq \underline{U}_{\alpha,i}\leq \overline{U}_{\alpha,i}\leq \overline{U}_{\alpha,i}^{(n)}\leq \overline{U}_{\alpha,i}^{(n-1)},\ i=0,1, \ldots, N_{x}, \ \alpha=1,2.
  \end{equation}
  If   $S=(S_{1},S_{2})$ is any other solution  in $\langle \widehat{U},\widetilde{U}\rangle$, then
  \begin{equation}\label{eq:un1}
  \underline{U}\leq S\leq \overline{U},\quad \mbox{in} \   \overline{\Omega}^{h}.
 \end{equation}
\end{theorem}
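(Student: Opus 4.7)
The plan is to mirror the architecture of the proof of Theorem \ref{mc}. The only structural difference between the block Jacobi iteration (\ref{eq:6}) and the block Gauss--Seidel iteration (\ref{eq:10}) is the extra term $-L_{\alpha,i}Z^{(n)}_{\alpha,i-1}$ on the left, which couples line $i$ to the already-updated line $i-1$. Consequently, every ``global in $i$'' comparison that in the Jacobi proof was dispatched by a single application of $(A_{\alpha,i}+C_{\alpha,i})^{-1}\geq O$ must here be established by a secondary induction on $i$, marching from the left boundary $i=0$ towards $i=N_{x}$. The $M$-matrix argument and the non-negativity $(A_{\alpha,i}+C_{\alpha,i})^{-1}\geq O$ are reused verbatim.

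First I would rewrite (\ref{eq:10}) at $n=1$ as
\[
(A_{\alpha,i}+C_{\alpha,i})\overline{Z}_{\alpha,i}^{(1)} = L_{\alpha,i}\overline{Z}_{\alpha,i-1}^{(1)} - \mathcal{K}_{\alpha,i}(\overline{U}^{(0)}_{i}).
\]
The boundary value $\overline{Z}_{\alpha,0}^{(1)}=g_{\alpha,0}-\widetilde{U}_{\alpha,0}\leq\mathbf{0}$ by (\ref{eq:3}) and the upper-solution property of $\overline{U}^{(0)}=\widetilde{U}$ gives $\mathcal{K}_{\alpha,i}(\overline{U}^{(0)}_{i})\geq\mathbf{0}$; combined with $L_{\alpha,i}\geq O$ and $(A_{\alpha,i}+C_{\alpha,i})^{-1}\geq O$, induction on $i$ then yields $\overline{Z}_{\alpha,i}^{(1)}\leq\mathbf{0}$ for all $i$. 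The mirror argument gives $\underline{Z}_{\alpha,i}^{(1)}\geq\mathbf{0}$. To obtain $\underline{U}^{(1)}\leq\overline{U}^{(1)}$, I would set $W^{(1)}_{\alpha,i}=\overline{U}^{(1)}_{\alpha,i}-\underline{U}^{(1)}_{\alpha,i}$, subtract the two copies of (\ref{eq:10}), and apply the mean-value theorem (\ref{eq:mvth3}) together with (\ref{eq:4})--(\ref{eq:5}) and $R_{\alpha,i}\geq O$ to deduce $(A_{\alpha,i}+C_{\alpha,i})W^{(1)}_{\alpha,i}-L_{\alpha,i}W^{(1)}_{\alpha,i-1}\geq\mathbf{0}$, and induct on $i$ from $W^{(1)}_{\alpha,0}=\mathbf{0}$.

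Next I would verify that $\overline{U}^{(1)}$ and $\underline{U}^{(1)}$ are themselves ordered upper and lower solutions of (\ref{eq:2}). Substituting $U^{(n)}=U^{(n-1)}+Z^{(n)}$ into the definition of $\mathcal{K}_{\alpha,i}$, using (\ref{eq:10}) to eliminate $A_{\alpha,i}Z^{(n)}_{\alpha,i}-L_{\alpha,i}Z^{(n)}_{\alpha,i-1}$, and applying (\ref{eq:mvth3}) produces the Gauss--Seidel analogue of (\ref{eq:R}),
\[
\mathcal{K}_{\alpha,i}(\overline{U}^{(1)}_{i})=-\Bigl(C_{\alpha,i}-\tfrac{\partial F_{\alpha,i}(\overline{E}^{(1)}_{\alpha,i},\overline{U}^{(0)}_{\alpha^{\prime},i})}{\partial u_{\alpha}}\Bigr)\overline{Z}_{\alpha,i}^{(1)}+\tfrac{\partial F_{\alpha,i}(\overline{U}^{(0)}_{\alpha,i},\overline{E}^{(1)}_{\alpha^{\prime},i})}{\partial u_{\alpha^{\prime}}}\overline{Z}_{\alpha^{\prime},i}^{(1)}-R_{\alpha,i}\overline{Z}_{\alpha,i+1}^{(1)},
\]
in which the $L_{\alpha,i}$--term has been absorbed exactly by the Gauss--Seidel correction. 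The sign of each of the three terms then follows from (\ref{eq:4}), (\ref{eq:5}), $R_{\alpha,i}\geq O$ and $\overline{Z}^{(1)}\leq\mathbf{0}$, giving $\mathcal{K}_{\alpha,i}(\overline{U}^{(1)}_{i})\geq\mathbf{0}$; analogously $\mathcal{K}_{\alpha,i}(\underline{U}^{(1)}_{i})\leq\mathbf{0}$. Induction on $n$ then produces the full monotone chain (\ref{eq:m1}); the pointwise limits $\overline{U}_{\alpha,i}$, $\underline{U}_{\alpha,i}$ exist, and passing to the limit in the $n$-th analogue of (\ref{eq:Rn}) (again $L$-free) together with $Z^{(n)}\to\mathbf{0}$ yields $\mathcal{K}_{\alpha,i}(\overline{U}_{i})=\mathcal{K}_{\alpha,i}(\underline{U}_{i})=\mathbf{0}$, so $\overline{U}$ and $\underline{U}$ solve (\ref{eq:2}); the enclosure property (\ref{eq:un1}) follows by applying the monotonicity result once more to the sectors $\langle\widehat{U},S\rangle$ and $\langle S,\widetilde{U}\rangle$, exactly as in the proof of Theorem \ref{mc}.

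The main obstacle is careful bookkeeping of two nested inductions: the outer induction is on the iteration index $n$, while within each iteration an inner induction on $i$ propagates the comparison result from the left boundary across the domain. One must verify that the inductive hypothesis at line $i$ has already been secured at line $i-1$ before the matrix inverse $(A_{\alpha,i}+C_{\alpha,i})^{-1}\geq O$ can be applied. The sign condition $v_{\alpha}\geq 0$ aligns the marching direction with the upwinding in (\ref{eq:nds}), which is precisely what ensures that the $L$-coupling appears with the correct sign on the left-hand side of (\ref{eq:10}); otherwise, the Gauss--Seidel proof is a line-by-line analogue of the Jacobi proof, slightly simplified by the vanishing of the $L$-term in the residual identity.
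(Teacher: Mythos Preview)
Your proposal is correct and follows essentially the same approach as the paper: the paper likewise handles the extra $-L_{\alpha,i}Z^{(n)}_{\alpha,i-1}$ term by an inner induction on $i$ for each of the sign estimates, derives the residual identity (\ref{eq:R1}) with the $L$-term absorbed exactly as you describe, and disposes of (\ref{eq:un1}) by referring back to the argument in Theorem~\ref{mc}. Your remark that the marching direction is dictated by $v_{\alpha}\geq 0$ is an apt gloss on the paper's standing assumption and the surrounding Remark on the Gauss--Seidel sweep direction.
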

\begin{proof}
Since $\overline{U}^{(0)}$ is an initial upper solution (\ref{eq:3}), from (\ref{eq:10}), we have
\begin{equation}\label{eq:11}
  A_{\alpha,i}\overline{Z}_{\alpha,i}^{(1)}-L_{\alpha,i}\overline{Z}_{\alpha,i-1}^{(1)}+C_{\alpha,i}\overline{Z}_{\alpha,i}^{(1)}=-\mathcal{K}_{\alpha,i}(\overline{U}_{i}^{(0)}),\ i=1,2,\ldots, N_{x}-1,
\end{equation}
\begin{equation*}
\overline{Z}_{\alpha,i}^{(1)}\leq0,\quad i=0,N_{x},\quad \alpha=1,2.
\end{equation*}
Taking into account that $L_{\alpha,i}\geq\emph{O}$, $(A_{\alpha,i}+C_{\alpha,i})^{-1}\geq\emph{O}$,\quad for $i=1$ in (\ref{eq:11}) and $\overline{Z}_{\alpha,0}^{(1)}\leq\mathbf{0}$, we conclude that $\overline{Z}_{\alpha,1}^{(1)}\leq\mathbf{0}$, $\alpha=1,2$. For $i=2$ in (\ref{eq:11}), using $L_{\alpha,2}\geq\emph{O}$ and $\overline{Z}_{\alpha,1}^{(1)}\leq\mathbf{0}$, we obtain $\overline{Z}_{\alpha,2}^{(1)}\leq\mathbf{0}$, $\alpha=1,2$. Thus, by induction on $i$, we prove that
\begin{equation}\label{eq:12}
\overline{Z}_{\alpha,i}^{(1)}\leq\mathbf{0},\quad i=0,1,\ldots,N_{x},\quad \alpha=1,2.
\end{equation}
Similarly, we can prove that
\begin{equation}\label{eq:13}
\underline{Z}_{\alpha,i}^{(1)}\geq\mathbf{0},\quad i=0,1,\ldots,N_{x},\quad \alpha=1,2.
\end{equation}
We now prove that
\begin{equation}\label{eq:14}
 \underline{U}_{\alpha,i}^{(1)}\leq \overline{U}_{\alpha,i}^{(1)},\quad i=0,1,\ldots,N_{x},\quad \alpha=1,2.
\end{equation}
Letting $W_{\alpha,i}^{(n)}=\overline{U}_{\alpha,i}^{(n)}-\underline{U}_{\alpha,i}^{(n)}$, $i=0,1,\ldots,N_{x}$, $\alpha=1,2$, from (\ref{eq:10}) for $\alpha=1$, we have
\begin{eqnarray}\label{eq:15}
  A_{1,i}W_{1,i}^{(1)}-L_{1,i}W_{1,i-1}^{(1)}+C_{1,i}W_{1,i}^{(1)}&=& C_{1,i}W_{1,i}^{(0)}+R_{1,i}W_{1,i+1}^{(0)} \\
  &-& \left[F_{1,i}(\overline{U}_{1,i}^{(0)},\overline{U}_{2,i}^{(0)})-F_{1,i}(\underline{U}_{1,i}^{(0)},\overline{U}_{2,i}^{(0)})\right] \nonumber \\
  &-& \left[F_{1,i}(\underline{U}_{1,i}^{(0)},\overline{U}_{2,i}^{(0)})-F_{1,i}(\underline{U}_{1,i}^{(0)},\underline{U}_{2,i}^{(0)})\right] \nonumber,
\end{eqnarray}
\begin{equation*}
i=1,2,\ldots,N_{x}-1,\quad   W_{1,i}^{(1)}=\mathbf{0},\quad i=0,N_{x}.
\end{equation*}
By the mean-value theorem (\ref{eq:mvth3}), we have
\begin{eqnarray*}
&&F_{1,i}(\overline{U}_{1,i}^{(0)},\overline{U}_{2,i}^{(0)})-F_{1,i}(\underline{U}_{1,i}^{(0)},\overline{U}_{2,i}^{(0)})=\\
&&\left(f_{1,i,1}(\overline{U}^{(0)}_{1,i,1},\overline{U}^{(0)}_{2,i,1}),\ldots,f_{1,i,N_{y}-1}(\overline{U}^{(0)}_{1,i,N_{y}-1},\overline{U}^{(0)}_{2,i,N_{y}-1})\right)^{T}-\\
&&\left(f_{1,i,1}(\underline{U}^{(0)}_{1,i,1},\overline{U}^{(0)}_{2,i,1}),\ldots,f_{1,i,N_{y}-1}(\underline{U}^{(0)}_{1,i,N_{y}-1},\overline{U}^{(0)}_{2,i,N_{y}-1})\right)^{T}=\
\end{eqnarray*}
$$
\frac{\partial F_{1,i}(Q_{1,i}^{(0)},\overline{U}_{2,i}^{(0)})}{\partial u_{1}}\left[\overline{U}_{1,i}^{(0)}-\underline{U}_{1,i}^{(0)}\right],
$$
\begin{eqnarray*}
&&F_{1,i}(\underline{U}_{1,i}^{(0)},\overline{U}_{2,i}^{(0)})-F_{1,i}(\underline{U}_{1,i}^{(0)},\underline{U}_{2,i}^{(0)})=\\
&&\left(f_{1,i,1}(\underline{U}^{(0)}_{1,i,1},\overline{U}^{(0)}_{2,i,1}),\ldots,f_{1,i,N_{y}-1}(\underline{U}^{(0)}_{1,i,N_{y}-1},\overline{U}^{(0)}_{2,i,N_{y}-1})\right)^{T}-\\
&&\left(f_{1,i,1}(\underline{U}^{(0)}_{1,i,1},\underline{U}^{(0)}_{2,i,1}),\ldots,f_{1,i,N_{y}-1}(\underline{U}^{(0)}_{1,i,N_{y}-1},\underline{U}^{(0)}_{2,i,N_{y}-1})\right)^{T}=\
\end{eqnarray*}
$$
\frac{\partial F_{1,i}(\underline{U}_{1,i}^{(0)},Q_{2,i}^{(0)})}{\partial u_{2}}\left[\overline{U}_{2,i}^{(0)}-\underline{U}_{2,i}^{(0)}\right],
$$
where $\underline{U}_{\alpha,i}^{(0)}\leq Q_{\alpha,i}^{(0)}\leq \overline{U}_{\alpha,i}^{(0)}$, $i=0,1,\ldots,N_{x}$, $\alpha=1,2$, and
\begin{eqnarray*}
&&\frac{\partial F_{1,i}(Q_{1,i}^{(0)},\overline{U}_{2,i}^{(0)})}{\partial u_{1}}=\\ &&\mbox{diag} \left(\frac{\partial f_{1,i,1}}{\partial u_{1}}(Q_{1,i,1}^{(0)},\overline{U}_{2,i,1}^{(0)}),\ldots,\frac{\partial f_{1,i,N_{y}-1}}{\partial u_{1}}(Q_{1,i,N_{y}-1}^{(0)},\overline{U}_{2,i,N_{y}-1}^{(0)}) \right),
\end{eqnarray*}
\begin{eqnarray*}
&&\frac{\partial F_{1,i}(\overline{U}_{1,i}^{(0)},Q_{2,i}^{(0)})}{\partial u_{2}}= \\ &&\mbox{diag} \left(\frac{\partial f_{1,i,1}}{\partial u_{2}}(\overline{U}_{1,i,1}^{(0)},Q_{2,i,1}^{(0)}),\ldots,\frac{\partial f_{1,i,N_{y}-1}}{\partial u_{2}}(\overline{U}_{1,i,N_{y}-1}^{(0)},Q_{2,i,N_{y}-1}^{(0)})\right).
\end{eqnarray*}
From here, we conclude that $\frac{\partial F_{1,i}}{\partial u_{1}}$, $\frac{\partial F_{1,i}}{\partial u_{2}}$ satisfy (\ref{eq:4}) and (\ref{eq:5}). From here and (\ref{eq:15}), we have
 \begin{eqnarray}\label{eq:16}
 A_{1,i}W_{1,i}^{(1)}-L_{1,i}W_{1,i-1}^{(1)}+C_{1,i}W_{1,i}^{(1)}&=&\left(C_{1,i}-\frac{\partial F_{1,i}}{\partial u_{1}}\right)W_{1,i}^{(0)}\\
 &-& \frac{\partial F_{1,i}}{\partial u_{2}}W_{2,i}^{(0)}+ R_{1,i}W_{1,i+1}^{(0)}, \nonumber
 \end{eqnarray}
 \begin{equation*}
  i=1,2,\ldots,N_{x}-1,\quad W_{1,i}^{(1)}=\mathbf{0},\quad i=0,N_{x}.
 \end{equation*}
 From here, (\ref{eq:4}), (\ref{eq:5}), taking into account that $W_{\alpha,i}^{(0)}\geq \mathbf{0}$, $i=0,1,\ldots,N_{x}$, $\alpha=1,2$, and $R_{1,i}\geq \emph{O}$, we obtain
\begin{equation}\label{eq:17}
  A_{1,i}W_{1,i}^{(1)}+C_{1,i}W_{1,i}^{(1)}\geq L_{1,i}W_{1,i-1}^{(1)},\quad i=1,2,\ldots,N_{x}-1,
\end{equation}
\begin{equation*}
  W_{1,i}^{(1)}=\mathbf{0},\quad i=0,N_{x}.
\end{equation*}
Taking into account that $(A_{1,i}+C_{1,i})^{-1}\geq \emph{O}$, $i=1,2,\ldots,N_{x}-1$,  for $i=1$ in (\ref{eq:17}) and $W_{1,0}^{(1)}=\mathbf{0}$, we conclude that $W_{1,1}^{(1)}\geq \mathbf{0}$. For $i=2$ in (\ref{eq:17}), using $L_{1,2}\geq0$ and $W_{1,1}^{(1)}\geq\mathbf{0}$, we obtain $W_{1,2}^{(1)}\geq \mathbf{0}$. Thus, by induction on $i$, we prove that
\begin{equation*}
W_{1,i}^{(1)}\geq \mathbf{0},\quad i=0,1,\ldots,N_{x}.
\end{equation*}
By following a similar argument, we can prove (\ref{eq:14}) for $\alpha=2$.

We now prove that $\overline{U}_{\alpha,i}^{(1)}$ and $\underline{U}_{\alpha,i}^{(1)}$, $i=0,1,\ldots,N_{x}$, $\alpha=1,2$, are, respectively, upper and lower solutions to (\ref{eq:10}).
From (\ref{eq:10}) for $\alpha=1$ and using the mean-value theorem (\ref{eq:mvth3}), we conclude that
\begin{eqnarray}\label{eq:R1}
\mathcal{K}_{1,i}(\overline{U}_{i}^{(1)})&=&-\left(C_{1,i}-\frac{\partial F_{1,i}(\overline{E}^{(1)}_{1,i},\overline{U}^{(0)}_{2,i})}{\partial u_{1}}\right) \overline{Z}_{1,i}^{(1)}+\frac{\partial F_{1,i}(\overline{U}^{(0)}_{1,i},\overline{E}^{(1)}_{2,i})}{\partial u_{2}}\overline{Z}_{2,i}^{(1)}\nonumber \\
&-&R_{1,i}\overline{Z}_{1,i+1}^{(1)},\quad  i=1,2, \ldots, N_{x}-1,
\end{eqnarray}
where
$$
\overline{U}^{(1)}_{\alpha,i}\leq \overline{E}^{(1)}_{\alpha,i}\leq \overline{U}^{(0)}_{\alpha,i},\quad i=0,1,\ldots,N_{x},\quad \alpha=1,2.
$$
 From (\ref{eq:12}), (\ref{eq:13}) and (\ref{eq:14}), we conclude that $\frac{\partial F_{1,i}}{\partial u_{1}}$ and $\frac{\partial F_{1,i}}{\partial u_{2}}$ satisfy
 (\ref{eq:4}) and (\ref{eq:5}). From (\ref{eq:4}), (\ref{eq:5}), (\ref{eq:12}) and taking into account that $R_{1,i}\geq \emph{O}$, $i=1,2,\ldots,N_{x}-1$, we conclude that
 \begin{equation}\label{eq:k3}
 \mathcal{K}_{1,i}(\overline{U}^{(1)}_{i})\geq \mathbf{0},\quad i=1,2,\ldots,N_{x}-1.
 \end{equation}
  Similarly, we prove that
  \begin{equation}\label{eq:k4}
 \mathcal{K}_{2,i}(\overline{U}^{(1)}_{i})\geq \mathbf{0}, \quad i=1,2,\ldots,N_{x}-1.
 \end{equation}
 From (\ref{eq:3}), (\ref{eq:k3}) and (\ref{eq:k4}), we conclude that $(\overline{U}^{(1)}_{1,i},\overline{U}^{(1)}_{2,i})$, $i=0,1,\ldots,N_{x}$, is an upper solution to (\ref{eq:2}). By following a similar manner, we have
$$
 \mathcal{K}_{1,i}(\underline{U}^{(1)}_{i})\leq \mathbf{0},\quad  \mathcal{K}_{2,i}(\underline{U}^{(1)}_{i})\leq \mathbf{0},\quad i=1,2,\ldots,N_{x}-1,
 $$
 which means  $(\underline{U}^{(1)}_{1,i},\underline{U}^{(1)}_{2,i})$, $i=0,1,\ldots,N_{x}$, is a lower solution to (\ref{eq:2}).

 By induction on $n$, we can prove that $\{\overline{U}^{(n)}_{\alpha,i}\}$, $\{\underline{U}^{(n)}_{\alpha,i}\}$, $i=0,1,\ldots,N_{x}$, $\alpha=1,2$, are, respectively,  monotone decreasing upper and  monotone
 increasing lower sequences of solutions.

Now we prove that the limiting functions of the upper $\{\overline{U}_{\alpha,i}^{(n)}\}$ and lower $\{\underline{U}_{\alpha,i}^{(n)}\}$, $i=0,1,\ldots,N_{x}$, $\alpha=1,2$, sequences are, respectively,  maximal and minimal solutions  of (\ref{eq:2}). From (\ref{eq:m1}), we conclude that $\lim \overline{U}_{\alpha,i}^{(n)}=\overline{U}_{\alpha,i}$, $i=0,1,\ldots,N_{x}$, $\alpha=1,2$,   as $n\rightarrow \infty$ exists and
  \begin{equation}\label{eq:k5}
  \lim_{n\rightarrow \infty}\overline{Z}_{\alpha,i}^{(n)}=\mathbf{0},\quad i=0,1, \ldots, N_{x},\quad \alpha=1,2.
  \end{equation}
Similar to (\ref{eq:R1}), we have
  \begin{eqnarray}\label{eq:18}
\mathcal{K}_{1,i}(\overline{U}_{i}^{(n)})&=&-\left(C_{1,i}-\frac{\partial F_{1,i}(\overline{E}^{(n)}_{1,i},\overline{U}^{(n-1)}_{2,i})}{\partial u_{1}}\right) \overline{Z}_{1,i}^{(n)}-R_{1,i}\overline{Z}_{1,i+1}^{(n)} \\
&+&\frac{\partial F_{1,i}(\overline{U}^{(n-1)}_{1,i},\overline{E}^{(n)}_{2,i})}{\partial u_{2}}\overline{Z}_{2,i}^{(n)},\quad  i=1, \ldots, N_{x}-1, \nonumber
\end{eqnarray}
where
$$
\overline{U}^{(n)}_{\alpha,i}\leq \overline{E}^{(n)}_{\alpha,i}\leq \overline{U}^{(n-1)}_{\alpha,i},\quad i=0,1,\ldots,N_{x},\quad \alpha=1,2.
$$
 By taking the limit of both sides of  (\ref{eq:18}), and using (\ref{eq:12}), it followers that
\begin{equation}\label{eq:k6}
\mathcal{K}_{1,i}(\overline{U}_{i})=\mathbf{0},\quad i=1,2,\ldots,N_{x}-1.
\end{equation}
Similarly, we obtain
\begin{equation}\label{eq:k7}
\mathcal{K}_{2,i}(\overline{U}_{i})=\mathbf{0},\quad i=1,2,\ldots,N_{x}-1.
\end{equation}
From (\ref{eq:k6}) and (\ref{eq:k7}), we conclude that $(\overline{U}_{1,i},\overline{U}_{2,i})$, $i=0,1,\ldots,N_{x}$, is a maximal solution to the nonlinear difference scheme (\ref{eq:2}). In a similar manner, we can prove that
 $$
 \mathcal{K}_{1,i}(\underline{U}_{i})=\mathbf{0},\quad  \mathcal{K}_{2,i}(\underline{U}_{i})=\mathbf{0} ,\quad i=1,2,\ldots,N_{x}-1,
 $$
 which means that $(\underline{U}_{1,i},\underline{U}_{2,i})$, $i=0,\ldots,N_{x}$, is a minimal solution to the nonlinear difference scheme (\ref{eq:2}).

The proof of (\ref{eq:un1}) repeats the proof of (\ref{eq:un}) in Theorem \ref{mc}.
\end{proof}
\section{Construction of initial upper and lower solutions}
We discuss the construction of upper $\widetilde{U}=(\widetilde{U}_{1},\widetilde{U}_{2})$ and lower $\widehat{U}=(\widehat{U}_{1},\widehat{U}_{2})$ solutions which are used as initial iterations in the monotone iterative methods (\ref{eq:6}) and (\ref{eq:10}).
\subsection{Bounded functions}
Assume that the functions $f_{\alpha}(x,y,u)$, $g_{\alpha}(x,y)$, $\alpha=1,2$ in (1) satisfy the following constraints
 \begin{equation}\label{eq:I2}
f_{\alpha}(x,y,\mathbf{0})\leq 0,\quad g_{\alpha}(x,y)\geq 0,\quad \alpha=1,2,
\end{equation}
\begin{equation*}
 f_{\alpha}(x,y,u)\geq -M_{\alpha},\quad  u\geq 0, \quad \alpha=1,2.
 \end{equation*}
 where $M_{\alpha}$, $\alpha=1,2$, are positive constants.
 From here and the definition of a lower solution (\ref{eq:3}), we conclude that the vector function
 \begin{equation}\label{eq:I3}
 \widehat{U}(x_{i},y_{j})=0,\quad (x_{i},y_{j})\in \overline{\Omega}^{h},
 \end{equation}
 is a  lower solution of the nonlinear difference scheme (\ref{eq:2}).
For upper solutions, we introduce the linear problems
\begin{equation}\label{eq:I4}
  \mathcal{L}_{\alpha,ij}\widetilde{U}_{\alpha}(x_{i},y_{j})=M_{\alpha},\quad (x_{i},y_{j})\in \Omega^{h},\quad \alpha=1,2,
\end{equation}
\begin{equation*}
  \widetilde{U}(x_{i},y_{j})=g(x_{i},y_{j}),\quad (x_{i},y_{j})\in \partial \Omega^{h}.
\end{equation*}
\begin{lemma}\label{L:I1}
Suppose that the assumptions in  (\ref{eq:I2}) are satisfied. Then the mesh functions  $\widehat{U}$ and  $\widetilde{U}$  from (\ref{eq:I3}) and (\ref{eq:I4}), are, respectively, ordered lower and upper solutions to (\ref{eq:2}), such that
\begin{equation}\label{eq:I5}
 \widehat{U}(x_{i},y_{j}) \leq \widetilde{U}(x_{i},y_{j}),\quad (x_{i},y_{j})\in \overline{\Omega}^{h}.
\end{equation}
\end{lemma}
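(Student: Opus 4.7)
The plan is to verify the three defining conditions of ordered upper and lower solutions directly, relying on the hypotheses in (\ref{eq:I2}) together with the maximum principle of Lemma \ref{lm1}(i) to control the sign of $\widetilde{U}$.

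First I would handle $\widehat{U}\equiv\mathbf{0}$. Since $\mathcal{L}_{\alpha,ij}\widehat{U}_{\alpha}=0$ at every interior mesh point, the condition $\mathcal{L}_{\alpha,ij}\widehat{U}_{\alpha}+f_{\alpha}(x_{i},y_{j},\widehat{U})\leq 0$ is equivalent to $f_{\alpha}(x_{i},y_{j},\mathbf{0})\leq 0$, which is the first inequality in (\ref{eq:I2}). On the boundary, $\widehat{U}=\mathbf{0}\leq g$ follows from $g_{\alpha}\geq 0$. Hence $\widehat{U}$ is a lower solution in the sense of (\ref{eq:3}).

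Next I would establish the pointwise ordering $\widetilde{U}\geq\mathbf{0}$, which I need both for the ordering (\ref{eq:I5}) itself and for invoking the one-sided bound $f_{\alpha}(x,y,u)\geq -M_{\alpha}$. The mesh function $\widetilde{U}_{\alpha}$ from (\ref{eq:I4}) satisfies $\mathcal{L}_{\alpha,ij}\widetilde{U}_{\alpha}=M_{\alpha}\geq 0$ in $\Omega^{h}$ and $\widetilde{U}_{\alpha}=g_{\alpha}\geq 0$ on $\partial\Omega^{h}$. Applying Lemma \ref{lm1}(i) with $c^{*}_{\alpha}\equiv 0$ (which is a valid non-negative bounded choice), I conclude $\widetilde{U}_{\alpha}(x_{i},y_{j})\geq 0$ throughout $\overline{\Omega}^{h}$ for $\alpha=1,2$. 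Combined with $\widehat{U}=\mathbf{0}$, this gives (\ref{eq:I5}).

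Finally I would verify that $\widetilde{U}$ is an upper solution. The boundary condition is immediate since $\widetilde{U}=g$ on $\partial\Omega^{h}$. At an interior point, using (\ref{eq:I4}) and the second inequality in (\ref{eq:I2}), which applies because we just showed $\widetilde{U}\geq\mathbf{0}$, I obtain
\begin{equation*}
\mathcal{L}_{\alpha,ij}\widetilde{U}_{\alpha}+f_{\alpha}(x_{i},y_{j},\widetilde{U})=M_{\alpha}+f_{\alpha}(x_{i},y_{j},\widetilde{U})\geq M_{\alpha}-M_{\alpha}=0,
\end{equation*}
so the upper-solution inequality (\ref{eq:3}) holds.

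The proof is essentially a chain of direct verifications; the only step requiring more than the assumptions in (\ref{eq:I2}) is the sign of $\widetilde{U}$, and that is the single place where the maximum principle intervenes. So the main obstacle, such as it is, is simply to sequence the arguments correctly, establishing $\widetilde{U}\geq\mathbf{0}$ via Lemma \ref{lm1}(i) before invoking the lower bound on $f_{\alpha}$ that is only assumed on $\{u\geq 0\}$.
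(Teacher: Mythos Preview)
Your proof is correct and uses the same ingredients as the paper's: direct verification of the inequalities in (\ref{eq:3}) together with one application of the maximum principle in Lemma \ref{lm1}(i). The computations are in fact identical, since with $\widehat{U}\equiv\mathbf{0}$ the difference $W=\widetilde{U}-\widehat{U}$ that the paper studies is just $\widetilde{U}$ itself.

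The one substantive difference is the order of the steps. The paper first asserts that $\widetilde{U}$ is an upper solution and only afterwards proves the ordering $\widetilde{U}\geq\widehat{U}=\mathbf{0}$; but the upper-solution inequality relies on the bound $f_{\alpha}(x,y,u)\geq -M_{\alpha}$ from (\ref{eq:I2}), which is assumed only for $u\geq 0$. You correctly establish $\widetilde{U}\geq\mathbf{0}$ first, via the maximum principle, and only then invoke that bound. Your sequencing is therefore logically tighter than the paper's, and your explicit verification that $\widehat{U}=\mathbf{0}$ is a lower solution (which the paper states just before the lemma without spelling out) rounds the argument off cleanly.
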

\begin{proof}
From (\ref{eq:I2}) and (\ref{eq:I4}),  we have
\begin{equation*}
   \mathcal{L}_{\alpha,ij}\widetilde{U}_{\alpha}(x_{i},y_{j})+f_{\alpha}(x_{i},y_{j},\widetilde{U})\geq 0,\quad (x_{i},y_{j})\in \Omega^{h}, \quad \alpha=1,2,
\end{equation*}
$$
\widetilde{U}(x_{i},y_{j})=g(x_{i},y_{j}), \quad (x_{i},y_{j})\in \partial \Omega^{h}.
$$
Thus, $\widetilde{U}(x_{i},y_{j})$ is an upper solution (\ref{eq:3}).
We now prove that the vector functions $\widetilde{U}(x_{i},y_{j})$ and $\widehat{U}(x_{i},y_{j})$, $(x_{i},y_{j})\in \overline{\Omega}^{h}$,  are ordered upper and lower solutions. Letting $W(x_{i},y_{j})=\widetilde{U}(x_{i},y_{j})-\widehat{U}(x_{i},y_{j})$, $(x_{i},y_{j})\in \overline{\Omega}^{h}$, from (\ref{eq:I3}) and (\ref{eq:I4}), we have
\begin{equation}\label{eq:I6}
  \mathcal{L}_{\alpha,ij}W_{\alpha}(x_{i},y_{j})=M_{\alpha},\quad(x_{i},y_{j})\in \Omega^{h},\quad \alpha=1,2
\end{equation}
\begin{equation*}
 W(x_{i},y_{j})\geq0,\quad (x_{i},y_{j})\in \partial\Omega^{h}.
\end{equation*}
From here, taking into account that $M_{\alpha}$, $\alpha=1,2$, are positive constants  and using the maximum principle in Lemma \ref{lm1}, we conclude that
\begin{equation*}
  W_{\alpha}(x_{i},y_{j})\geq0,\quad (x_{i},y_{j})\in \overline{\Omega}^{h}, \quad\alpha=1,2.
\end{equation*}
Thus, we prove (\ref{eq:I5}).
\end{proof}
\subsection{Constant upper and lower solutions}
Assume that the functions $f_{\alpha}(x,y,u)$, $g_{\alpha}(x,y)$, $\alpha=1,2$, in (\ref{cont1a}) satisfy the following conditions
\begin{equation}\label{eq:I7}
  f_{\alpha}(x,y,\mathbf{0})\leq0,\quad g_{\alpha}(x,y)\geq0,\quad \alpha=1,2.
\end{equation}
The vector function from (\ref{eq:I3}) is a  lower solution to (\ref{eq:2}). We suppose that there exist positive constants $K_{1}$, $K_{2}$ which satisfy the inequalities
\begin{equation}\label{eq:I8}
  f_{\alpha}(x,y,K)\geq0,\quad (x,y)\in \overline{\omega},\quad g_{\alpha}(x,y)\leq K_{\alpha},\quad (x,y)\in \partial \omega,\quad \alpha=1,2,
\end{equation}
where $K=(K_{1},K_{2})$. In the following lemma, we prove that the vector function
\begin{equation}\label{eq:I9}
  \widetilde{U}(x_{i},y_{j})=K,\quad (x_{i},y_{j}) \in \overline{\Omega}^{h},
\end{equation}
is an  upper solution to (\ref{eq:2}).
\begin{lemma}\label{L:I2}
Suppose that the assumptions in  (\ref{eq:I7}) and (\ref{eq:I8}) are satisfied. Then the mesh functions $\widehat{U}=(\widehat{U}_{1},\widehat{U}_{2})$ and $\widetilde{U}=(\widetilde{U}_{1},\widetilde{U}_{2})$  from (\ref{eq:I3}) and (\ref{eq:I9}) are, respectively, ordered lower and upper solutions to (\ref{eq:2}) and satisfy (\ref{eq:I5}).
\end{lemma}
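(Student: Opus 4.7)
The plan is to verify directly that the two mesh functions satisfy the discrete inequalities in the definition of ordered upper and lower solutions (\ref{eq:3}), mirroring the structure of the proof of Lemma \ref{L:I1} but replacing the linear auxiliary problem (\ref{eq:I4}) by the constant vector $K$.

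First I would handle the lower solution. Since $\widehat{U}(x_i,y_j)=\mathbf{0}$ is constant, each central and backward difference of $\widehat{U}_\alpha$ vanishes, so $\mathcal{L}_{\alpha,ij}\widehat{U}_\alpha(x_i,y_j)=0$ on $\Omega^h$. Combined with $f_\alpha(x,y,\mathbf{0})\leq 0$ from (\ref{eq:I7}), this gives $\mathcal{L}_{\alpha,ij}\widehat{U}_\alpha+f_\alpha(x_i,y_j,\widehat{U})\leq 0$, while the boundary inequality $\widehat{U}\leq g$ follows from $g_\alpha\geq 0$ in (\ref{eq:I7}). Hence $\widehat{U}$ satisfies the lower solution inequalities in (\ref{eq:3}).

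Next I would treat the upper solution. The constant vector $\widetilde{U}(x_i,y_j)=K$ again has all discrete derivatives equal to zero, so $\mathcal{L}_{\alpha,ij}\widetilde{U}_\alpha(x_i,y_j)=0$ on $\Omega^h$. The first inequality in (\ref{eq:I8}) then yields $\mathcal{L}_{\alpha,ij}\widetilde{U}_\alpha+f_\alpha(x_i,y_j,\widetilde{U})=f_\alpha(x_i,y_j,K)\geq 0$, and the second inequality in (\ref{eq:I8}) gives $\widetilde{U}=K\geq g$ on $\partial\Omega^h$. Thus $\widetilde{U}$ satisfies the upper solution inequalities in (\ref{eq:3}).

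Finally, the ordering (\ref{eq:I5}) is immediate: since $K_\alpha>0$ and $\widehat{U}_\alpha(x_i,y_j)=0$, we have $\widehat{U}(x_i,y_j)\leq K=\widetilde{U}(x_i,y_j)$ on $\overline{\Omega}^h$. There is no real obstacle here; the only point worth noting explicitly is that the discrete operator $\mathcal{L}_{\alpha,ij}$ annihilates constants, which is a direct consequence of the definitions of $D^2_x,D^2_y,D^1_x,D^1_y$. Unlike the proof of Lemma \ref{L:I1}, no appeal to the maximum principle of Lemma \ref{lm1} is required, because the comparison between $\widehat{U}$ and $\widetilde{U}$ is pointwise trivial.
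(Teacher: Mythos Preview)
Your proof is correct. The verification that $\widetilde{U}=K$ is an upper solution matches the paper's argument: both compute $\mathcal{L}_{\alpha,ij}K=0$ (the paper implicitly, you explicitly) and then invoke (\ref{eq:I8}) to obtain $f_\alpha(x_i,y_j,K)\geq 0$ and $K\geq g$ on $\partial\Omega^h$. The paper takes the fact that $\widehat{U}=\mathbf{0}$ is a lower solution as already established in the text preceding the lemma, while you spell it out, but this is not a material difference.

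The genuine divergence is in establishing the ordering (\ref{eq:I5}). You observe directly that $K_\alpha>0$ gives $\widehat{U}=\mathbf{0}\leq K=\widetilde{U}$ pointwise, with no further work. The paper instead follows the template of Lemma~\ref{L:I1}: it sets $W=\widetilde{U}-\widehat{U}$, subtracts the upper and lower solution inequalities to obtain $\mathcal{L}_{\alpha,ij}W_\alpha+f_\alpha(x_i,y_j,\widetilde{U})-f_\alpha(x_i,y_j,\widehat{U})\geq 0$, notes that $f_\alpha(K)-f_\alpha(\mathbf{0})\geq 0$ from (\ref{eq:I7}) and (\ref{eq:I8}), and then appeals to the maximum principle in Lemma~\ref{lm1}. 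Your route is shorter and, as you remark, avoids the maximum principle entirely; the paper's route is unnecessarily elaborate in this constant case (it is genuinely needed only in Lemma~\ref{L:I1}, where $\widetilde{U}$ is not constant).
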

\begin{proof}
From (\ref{eq:I8}) and (\ref{eq:I9}), we have
\begin{equation}\label{eq:I10}
\mathcal{L}_{\alpha,ij}\widetilde{U}_{\alpha}(x_{i},y_{j})+f_{\alpha}(x_{i},y_{j},\widetilde{U})= f_{\alpha}(x_{i},y_{j},K),\quad (x_{i},y_{j})\in \Omega^{h},
\end{equation}
$$
 \alpha=1,2,\quad  g(x_{i},y_{j})\leq K,\quad (x_{i},y_{j})\in \partial\Omega^{h}.
$$
From here and (\ref{eq:I8}), we conclude that $\widetilde{U}(x_{i},y_{j})$, $(x_{i},y_{j})\in \overline{\Omega}^{h}$, is an upper solution (\ref{eq:3}). We now prove that the vector functions $\widetilde{U}(x_{i},y_{j})$ and $\widehat{U}(x_{i},y_{j})$, $(x_{i},y_{j})\in \overline{\Omega}^{h}$,  are ordered upper and lower solutions. Letting $W(x_{i},y_{j})=\widetilde{U}(x_{i},y_{j})-\widehat{U}(x_{i},y_{j})$, $(x_{i},y_{j})\in
\overline{\Omega}^{h}$, from (\ref{eq:3}),  we have
\begin{equation}\label{eq:I11}
  \mathcal{L}_{\alpha,ij}W_{\alpha}(x_{i},y_{j})+ f_{\alpha}(x_{i},y_{j},\widetilde{U})-f_{\alpha}(x_{i},y_{j},\widehat{U})\geq0,\quad(x_{i},y_{j})\in \Omega^{h},
\end{equation}
\begin{equation*}
\alpha=1,2,\quad W(x_{i},y_{j})\geq0,\quad (x_{i},y_{j})\in \partial\Omega^{h}.
\end{equation*}
From (\ref{eq:I3}), (\ref{eq:I7}), (\ref{eq:I8}) and (\ref{eq:I9}), we conclude that
$$
f_{\alpha}(x_{i},y_{j},\widetilde{U})-f_{\alpha}(x_{i},y_{j},\widehat{U})\geq0,\quad (x_{i},y_{j})\in \overline{\Omega}^{h}, \quad \alpha=1,2.
$$
From here, (\ref{eq:I11}) and using the maximum principle in Lemma \ref{lm1}, we conclude that
$$
W_{\alpha}(x_{i},y_{j})\geq0,\quad (x_{i},y_{j})\in \overline{\Omega}^{h},\quad \alpha=1,2.
$$
Thus, we prove (\ref{eq:I5}).
\end{proof}
\subsection{Gas-liquid interaction model}
The following example illustrates the construction of initial upper and lower solutions for a gas-liquid interaction model. Consider the gas-liquid interaction model where a dissolved gas A and a dissolved reactant B
interact in a bounded diffusion medium $\omega$ (see in \cite{P92} for details). The chemical reaction scheme is given by $A+k_{1}B\rightarrow k_{2}P$ and is called the second order reaction, where $k_{1}$ and $k_{2}$ are the rate constants and P is the product. Denote by $z_{1}(x,y)$ and $z_{2}(x,y)$ the concentrations of the dissolved gas $A$ and the reactant $B$, respectively. Then the above reactant
scheme is governed by (\ref{cont1a}) with $\mbox{L}_{\alpha}z_{\alpha}=\varepsilon_{\alpha}\triangle z_{\alpha}$, $f_{\alpha}=\sigma_{\alpha}z_{1}z_{2}$, $\alpha=1,2$, where $\sigma_{1}$ is the rate constant,
$\sigma_{2}=k_{1}\sigma_{1}$. By choosing a suitable positive constant $\rho_{1}>0$ and letting $u_{1}=\rho_{1}-z_{1}\geq0$, $u_{2}=z_{2}$, we have
\begin{equation}\label{ex1}
  f_{1}=-\sigma_{1}(\rho_{1}-u_{1})u_{2},\quad f_{2}=\sigma_{2}(\rho_{1}-u_{1})u_{2},
\end{equation}
and the system (\ref{cont1a}) is reduced to
$$
-\varepsilon_{\alpha}\triangle u_{\alpha}+f_{\alpha}(u_{1},u_{2})=0,\quad (x,y)\in \omega,\quad \alpha=1,2,
$$
$$
u_{1}(x,y)=g_{1}^{*}(x,y)\geq0,\quad u_{2}(x,y)=g_{2}(x,y)\geq0,\quad (x,y)\in\partial \omega,
$$
where $g^{*}_{1}=\rho_{1}-g_{1}\geq0$ and $g_{1}\geq0$ on $\partial \omega$. It is clear from (\ref{ex1}) that $(f_{1},f_{2})$ is quasi-monotone nondecreasing in the rectangle
$$
S_{\rho}=[0,\rho_{1}]\times [0,\rho_{2}]
$$
for any positive constant $\rho_{2}$.

The nonlinear difference scheme (\ref{eq:2}) is reduced to
\begin{equation}\label{ex2}
  \mathcal{L}_{\alpha,ij}U_{\alpha}(x_{i},y_{j})+f_{\alpha}(U)=0,\quad (x_{i},y_{j}) \in \Omega^{h},\quad \alpha=1,2,
  \end{equation}
  \begin{equation*}
 U_{1}(x_{i},y_{j})=g^{*}_{1}(x_{i},y_{j}),\quad  U_{2}(x_{i},y_{j})=g_{2}(x_{i},y_{j}),\quad (x_{i},y_{j})\in \partial \Omega^{h},
\end{equation*}
where $f_{\alpha}$, $\alpha=1,2$, are defined in (\ref{ex1}).

Introduce the following linear problems
\begin{equation}\label{ex3}
  \mathcal{L}_{\alpha,ij}W_{\alpha}(x_{i},y_{j})=0,\quad (x_{i},y_{j})\in\Omega^{h},\quad \alpha=1,2,
\end{equation}
$$
W_{1}(x_{i},y_{j})=g_{1}^{*}(x_{i},y_{j}),\quad W_{2}(x_{i},y_{j})=g_{2}(x_{i},y_{j}),\quad (x_{i},y_{j})\in \partial\Omega^{h}.
$$
We now show that
\begin{equation}\label{ex4}
  (\widetilde{U}_{1},\widetilde{U}_{2})=(\rho_{1},W_{2}),\quad(\widehat{U}_{1},\widehat{U}_{2})=(W_{1},0),
\end{equation}
are, respectively, upper and lower solutions to (\ref{ex2}).
From (\ref{ex1}), (\ref{ex3}) and (\ref{ex4}), we obtain
\begin{equation*}
  \mathcal{L}_{1,ij}\widetilde{U}_{1}+f_{1}( \widetilde{U}_{1},\widetilde{U}_{2})=\mathcal{L}_{1,ij}\ \rho_{1}+f_{1}(\rho_{1},W_{2})=0,\quad (x_{i},y_{j})\in \Omega^{h},
\end{equation*}
\begin{equation*}
  \widetilde{U}_{1}(x_{i},y_{j})=\rho_{1}\geq g^{*}_{1}(x_{i},y_{j}),\quad (x_{i},y_{j})\in \partial\Omega^{h},
\end{equation*}
and
\begin{equation*}
  \mathcal{L}_{2,ij}\widetilde{U}_{2}+f_{2}(\widetilde{U}_{1},\widetilde{U}_{2})=\mathcal{L}_{2,ij}W_{2}+f_{2}(\rho_{1},W_{2})=0,\quad (x_{i},y_{j})\in \Omega^{h},
\end{equation*}
\begin{equation*}
  \widetilde{U}_{2}(x_{i},y_{j})= g_{2}(x_{i},y_{j}),\quad (x_{i},y_{j})\in \partial\Omega^{h}.
\end{equation*}
From here and using (\ref{eq:3}), we conclude that $(\widetilde{U}_{1},\widetilde{U}_{2})$ is an upper solution of (\ref{ex2}). Similarly, we have
\begin{equation*}
  \mathcal{L}_{1,ij}\widehat{U}_{1}+f_{1}(\widehat{U}_{1},\widehat{U}_{2})=\mathcal{L}_{1,ij}W_{1}+f_{1}(W_{1},0)=0,\quad (x_{i},y_{j})\in \Omega^{h},
\end{equation*}
\begin{equation*}
  \widehat{U}_{1}(x_{i},y_{j})=W_{1}(x_{i},y_{j})= g^{*}_{1}(x_{i},y_{j}),\quad (x_{i},y_{j})\in \partial\Omega^{h}.
\end{equation*}
and
\begin{equation*}
  \mathcal{L}_{2,ij}\widehat{U}_{2}+f_{2}(\widehat{U}_{1},\widehat{U}_{2})=\mathcal{L}_{2,ij}\ 0+f_{2}(W_{1},0)=0,\quad (x_{i},y_{j})\in \Omega^{h},
\end{equation*}
\begin{equation*}
  \widehat{U}_{2}(x_{i},y_{j})\leq g_{2}(x_{i},y_{j}),\quad (x_{i},y_{j})\in \partial\Omega^{h}.
\end{equation*}
From here and using (\ref{eq:3}), we conclude that $(\widehat{U}_{1},\widehat{U}_{2})$ is a lower solution of (\ref{ex2}).

Now we prove that
\begin{equation}\label{a}(\widetilde{U}_{1},\widetilde{U}_{2})\geq (\widehat{U}_{1},\widehat{U}_{2}),\quad \mbox{in}\ \overline{\Omega}^{h}.
\end{equation}
From (\ref{ex1}) and (\ref{ex3}), we have $\rho_{1}-W_{1}\geq0$, $\mbox{on} \ \partial\Omega^{h}$. From here, (\ref{ex3}) and (\ref{ex4}), we obtain
\begin{equation*}
  \mathcal{L}_{1,ij}(\widetilde{U}_{1}-\widehat{U}_{1})=\mathcal{L}_{1,ij}(\rho_{1}-W_{1})=0,\quad \mbox{in}\ \Omega^{h},
\end{equation*}
$$
\rho_{1}-W_{1}\geq0, \quad \mbox{on}\ \partial\Omega^{h}.
$$
From here and using Lemma \ref{lm1}, we obtain
 $$
 \rho_{1}\geq W_{1},\quad \mbox{in}\ \overline{\Omega}^{h}.
$$

Now from (\ref{ex3}) and (\ref{ex4}), we have $\widetilde{U}_{2}-\widehat{U}_{2}=W_{2}-0=W_{2}$, and
$$
\mathcal{L}_{2,ij}(\widetilde{U}_{2}-\widehat{U}_{2})=\mathcal{L}_{1,ij}W_{2}(x_{i},y_{j})=0,\quad (x_{i},y_{j})\in \Omega^{h},
$$
$$
W_{2}(x_{i},y_{j})=g_{2}(x_{i},y_{j})\geq0,\quad (x_{i},y_{j})\in \partial \Omega^{h}.
$$
From here and using Lemma \ref{lm1}, we conclude that
$$
W_{2}(x_{i},y_{j})\geq0,\quad (x_{i},y_{j})\in \overline{\Omega}^{h}.
$$
Thus, we prove (\ref{a}).
\par
From (\ref{ex1}), in the sector $\langle \widehat{U}, \widetilde{U}\rangle$, we have
$$
\frac{\partial f_{1}}{\partial u_{1}}(U_{1},U_{2})=\sigma_{1}U_{2}(x_{i},y_{j})\leq \sigma_{1}W_{2}(x_{i},y_{j}),\quad (x_{i},y_{j})\in \overline{\Omega}^{h},
$$
$$
\frac{\partial f_{2}}{\partial u_{2}}(U_{1},U_{2})=\sigma_{2}(\rho_{1}-U_{1}(x_{i},y_{j}))\leq \sigma_{2}\rho_{1},\quad (x_{i},y_{j})\in \overline{\Omega}^{h},
$$
$$
-\frac{\partial f_{1}}{\partial u_{2}}=\sigma_{1}(\rho_{1}-U_{1}(x_{i},y_{j}))\geq \sigma_{1}(\rho_{1}-\rho_{1})\geq0,\quad(x_{i},y_{j})\in \overline{\Omega}^{h},
$$
$$
-\frac{\partial f_{2}}{\partial u_{1}}=\sigma_{2}U_{2}(x_{i},y_{j})\geq0,\quad(x_{i},y_{j})\in \overline{\Omega}^{h},
$$
and the assumptions in (\ref{eq:4}) are satisfied  with
$$
c_{1}(x_{i},y_{j})=\sigma_{1} W_{2}(x_{i},y_{j}),\quad c_{2}(x_{i},y_{j})= \sigma_{2}\rho_{1},\quad (x_{i},y_{j})\in \overline{\Omega}^{h}.
$$
From here and (\ref{ex4}), we conclude that Theorems \ref{mc} and \ref{mc1} hold  for the discrete gas-liquid interaction model (\ref{ex2}).
\section{Comparison of the block monotone Jacobi and block monotone Gauss--Seidel methods}
The following theorem shows that the block monotone Gauss--Seidel  method (\ref{eq:10}) converge not slower than the block monotone Jacobi method (\ref{eq:6}).
\begin{theorem}\label{eq:comp1}
Let $\widetilde{U}=(\widetilde{U}_{1},\widetilde{U}_{2})$ and $\widehat{U}=(\widehat{U}_{1},\widehat{U}_{2})$
 be, respectively, ordered upper and lower solutions (\ref{eq:3}). Assume that the functions $f_{\alpha}(x_{i},y_{j},U)$, $\alpha=1,2$, satisfy (\ref{eq:4}) and (\ref{eq:5}).  Suppose that the sequences $\{(\overline{U}^{(n)}_{\alpha,i})_{J},(\underline{U}^{(n)}_{\alpha,i})_{J}\}$ and $\{(\overline{U}^{(n)}_{\alpha,i})_{GS},(\underline{U}^{(n)}_{\alpha,i})_{GS}\}$, $i=0,1,\ldots,N_{x}$, $\alpha=1,2$, are, respectively, the sequences generated by the block monotone Jacobi method (\ref{eq:6}) and the block monotone Gauss--Seidel method (\ref{eq:10}), where $(\overline{U}^{(0)})_{J}=(\overline{U}^{(0)})_{GS}=\widetilde{U}$ and
  $(\underline{U}^{(0)})_{J}=(\underline{U}^{(0)})_{GS}=\widehat{U}$, then
  \begin{equation}\label{eq:comp2}
(\underline{U}^{(n)}_{\alpha,i})_{J}\leq  (\underline{U}^{(n)}_{\alpha,i})_{GS}\leq (\overline{U}^{(n)}_{\alpha,i})_{GS} \leq (\overline{U}^{(n)}_{\alpha,i})_{J},\quad i=0,1,\ldots,N_{x},\quad \alpha=1,2.
  \end{equation}
\end{theorem}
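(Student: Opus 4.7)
The plan is to prove the two outer inequalities in (\ref{eq:comp2}) by induction on $n$; the middle inequality $(\underline{U}^{(n)}_{\alpha,i})_{GS}\le (\overline{U}^{(n)}_{\alpha,i})_{GS}$ is already part of (\ref{eq:m1}) in Theorem \ref{mc1}. The base case $n=0$ is immediate since both methods start from the same ordered pair $\widetilde{U},\widehat{U}$. For the inductive step I will concentrate on the upper sequences and set
\[
P^{(n)}_{\alpha,i}=(\overline{U}^{(n)}_{\alpha,i})_{J}-(\overline{U}^{(n)}_{\alpha,i})_{GS},
\]
aiming to show $P^{(n)}_{\alpha,i}\ge\mathbf{0}$. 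The treatment of the lower sequences is strictly analogous with the inequality signs reversed.

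The first step is to rewrite (\ref{eq:6}) and (\ref{eq:10}) as direct update formulas for $U^{(n)}_{\alpha,i}$ (by substituting $Z^{(n)}_{\alpha,i}=U^{(n)}_{\alpha,i}-U^{(n-1)}_{\alpha,i}$ and cancelling $A_{\alpha,i}U^{(n-1)}_{\alpha,i}$ against the residual $\mathcal{K}_{\alpha,i}$). Subtracting the Gauss--Seidel identity from the Jacobi one at level $n$, the result is
\[
(A_{\alpha,i}+C_{\alpha,i})P^{(n)}_{\alpha,i}
=L_{\alpha,i}\bigl[(\overline{U}^{(n-1)}_{\alpha,i-1})_{J}-(\overline{U}^{(n)}_{\alpha,i-1})_{GS}\bigr]
+C_{\alpha,i}P^{(n-1)}_{\alpha,i}+R_{\alpha,i}P^{(n-1)}_{\alpha,i+1}
-\Delta F^{(n-1)}_{\alpha,i},
\]
where $\Delta F^{(n-1)}_{\alpha,i}=F_{\alpha,i}((\overline{U}^{(n-1)}_{i})_{J})-F_{\alpha,i}((\overline{U}^{(n-1)}_{i})_{GS})$ and the boundary values satisfy $P^{(n)}_{\alpha,0}=P^{(n)}_{\alpha,N_x}=\mathbf{0}$. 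Applying the mean-value theorem (\ref{eq:mvth3}) to $\Delta F^{(n-1)}_{\alpha,i}$ and combining with $C_{\alpha,i}P^{(n-1)}_{\alpha,i}$, I obtain a coefficient matrix $C_{\alpha,i}-\partial F_{\alpha,i}/\partial u_{\alpha}$ that is diagonal and nonnegative by (\ref{eq:4}), while the cross term picks up $-(\partial F_{\alpha,i}/\partial u_{\alpha'})P^{(n-1)}_{\alpha',i}$ with $-\partial F_{\alpha,i}/\partial u_{\alpha'}\ge O$ by (\ref{eq:5}); both contributions are nonnegative by the induction hypothesis $P^{(n-1)}_{\alpha,i},P^{(n-1)}_{\alpha',i}\ge\mathbf{0}$.

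The main obstacle is the mixed term $L_{\alpha,i}[(\overline{U}^{(n-1)}_{\alpha,i-1})_{J}-(\overline{U}^{(n)}_{\alpha,i-1})_{GS}]$, which contains iterates at different levels. I will split it as
\[
L_{\alpha,i}\bigl[(\overline{U}^{(n-1)}_{\alpha,i-1})_{J}-(\overline{U}^{(n)}_{\alpha,i-1})_{GS}\bigr]
=L_{\alpha,i}P^{(n)}_{\alpha,i-1}-L_{\alpha,i}(\overline{Z}^{(n)}_{\alpha,i-1})_{J},
\]
because the leftover piece $(\overline{U}^{(n-1)}_{\alpha,i-1})_{J}-(\overline{U}^{(n)}_{\alpha,i-1})_{J}=-(\overline{Z}^{(n)}_{\alpha,i-1})_{J}$ is $\ge\mathbf{0}$ thanks to the Jacobi monotonicity statement (\ref{eq:7}) in Theorem \ref{mc}, and $L_{\alpha,i}\ge O$. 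Moving the $L_{\alpha,i}P^{(n)}_{\alpha,i-1}$ term to the left-hand side, the identity takes exactly the Gauss--Seidel form
\[
(A_{\alpha,i}+C_{\alpha,i})P^{(n)}_{\alpha,i}-L_{\alpha,i}P^{(n)}_{\alpha,i-1}\ge\mathbf{0},
\qquad P^{(n)}_{\alpha,0}=\mathbf{0},
\]
to which I can apply the sweep argument used in the proof of Theorem \ref{mc1}: $(A_{\alpha,i}+C_{\alpha,i})^{-1}\ge O$ and $L_{\alpha,i}\ge O$ give $P^{(n)}_{\alpha,i}\ge\mathbf{0}$ by induction on $i$ from $i=0$ to $i=N_x-1$, which closes the induction on $n$.

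Repeating the same chain with $(\underline{U}^{(n)}_{\alpha,i})_{GS}-(\underline{U}^{(n)}_{\alpha,i})_{J}$ in place of $P^{(n)}_{\alpha,i}$, and invoking (\ref{eq:8}) for the Jacobi monotonicity of the lower sequence, yields the remaining inequality $(\underline{U}^{(n)}_{\alpha,i})_{J}\le (\underline{U}^{(n)}_{\alpha,i})_{GS}$, completing the proof of (\ref{eq:comp2}). The delicate point throughout is that the sign of the auxiliary term $-L_{\alpha,i}(\overline{Z}^{(n)}_{\alpha,i-1})_{J}$ is dictated by the one-sided monotonicity already established in Theorems \ref{mc} and \ref{mc1}, so the argument is entirely self-contained once those monotonicity facts are in hand.
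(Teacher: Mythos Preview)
Your proof is correct but takes a structurally different route from the paper's. Both arguments subtract the Jacobi and Gauss--Seidel update identities to obtain an equation for the difference (you work with the upper sequences and $P^{(n)}_{\alpha,i}$, the paper with the lower sequences and $\underline{W}^{(n)}_{\alpha,i}$), and in both cases the only troublesome term is the mixed-level expression $L_{\alpha,i}\bigl[(\text{level }n{-}1)_{J}-(\text{level }n)_{GS}\bigr]$. The paper handles it by invoking the \emph{Gauss--Seidel} monotonicity of Theorem~\ref{mc1}, $(\underline{U}^{(n)}_{\alpha,i-1})_{GS}\ge(\underline{U}^{(n-1)}_{\alpha,i-1})_{GS}$, which bounds the mixed term below by $L_{\alpha,i}\underline{W}^{(n-1)}_{\alpha,i-1}$; the right-hand side then involves only level-$(n{-}1)$ quantities, and a single induction on $n$ (using Lemma~\ref{mono1} to control the $F$-difference) finishes the argument. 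You instead invoke the \emph{Jacobi} monotonicity of Theorem~\ref{mc}, $-(\overline{Z}^{(n)}_{\alpha,i-1})_{J}\ge\mathbf{0}$, which bounds the mixed term below by $L_{\alpha,i}P^{(n)}_{\alpha,i-1}$; this places a level-$n$ quantity at index $i-1$ on the right, forcing a nested induction: an inner sweep in $i$ at each fixed $n$, followed by the outer induction on $n$. Both decompositions are valid; the paper's is slightly shorter (one induction instead of two), while yours mirrors the Gauss--Seidel sweep structure more explicitly. One small remark: your citation of (\ref{eq:7}) strictly covers only $n=1$; what you actually need is the full monotone-decreasing conclusion $(\overline{U}^{(n)}_{\alpha,i})_{J}\le(\overline{U}^{(n-1)}_{\alpha,i})_{J}$ established in Theorem~\ref{mc}.
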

\begin{proof}
 From (\ref{eq:6}) and (\ref{eq:10}), we have
 \begin{eqnarray*}
   A_{\alpha,i}(U_{\alpha,i}^{(n)})_{J}&+&C_{\alpha,i}(U^{(n)}_{\alpha,i})_{J} =C_{\alpha,i}(U^{(n-1)}_{\alpha,i})_{J}+ L_{\alpha,i}(U_{\alpha,i-1}^{(n-1)})_{J} \\
    &+&R_{\alpha,i}(U_{\alpha,i+1}^{(n-1)})_{J} - F_{\alpha,i}(U_{\alpha,i}^{(n-1)})_{J}+G^{*}_{\alpha,i}, \nonumber
 \end{eqnarray*}
 $$
i=1,2,\ldots,N_{x}-1,\quad \alpha=1,2,\quad (U^{(n-1)}_{i})_{J}=g_{i},\quad i=0,N_{x}.
 $$
  \begin{eqnarray*}
   A_{\alpha,i}(U_{\alpha,i}^{(n)})_{GS}&+&C_{\alpha,i}(U^{(n)}_{\alpha,i})_{GS}= L_{\alpha,i}(U_{\alpha,i-1}^{(n)})_{GS}+ C_{\alpha,i}(U^{(n-1)}_{\alpha,i})_{GS}\\
    &+&R_{\alpha,i}(U_{\alpha,i+1}^{(n-1)})_{GS} - F_{\alpha,i}(U_{\alpha,i}^{(n-1)})_{GS}+G^{*}_{\alpha,i}, \nonumber
 \end{eqnarray*}
  $$
i=1,2,\ldots,N_{x}-1,\quad \alpha=1,2,\quad (U^{(n-1)}_{i})_{GS}=g_{i},\quad i=0,N_{x}.
 $$
From here, letting $\underline{W}^{(n)}_{\alpha,i}=\left(\underline{U}^{(n)}_{\alpha,i}\right)_{GS}-\left(\underline{U}^{(n)}_{\alpha,i}\right)_{J}$, $i=0,1,\ldots,N_{x}$, $\alpha=1,2$, we have
 \begin{eqnarray}\label{eq:comp3}
      A_{\alpha,i}\underline{W}^{(n)}_{\alpha,i}&+&C_{\alpha,i}\underline{W}^{(n)}_{\alpha,i} \nonumber =C_{\alpha,i}\underline{W}^{(n-1)}_{\alpha,i}+L_{\alpha,i}\left((\underline{U}^{(n)}_{\alpha,i-1})_{GS}-(\underline{U}^{(n-1)}_{\alpha,i-1})_{J}\right) \\ 
       &+&R_{\alpha,i}\underline{W}^{(n-1)}_{\alpha,i+1}-F_{\alpha,i}\left((\underline{U}^{(n-1)}_{i})_{GS}\right)+ F_{\alpha,i}\left((\underline{U}^{(n-1)}_{i})_{J}\right),
 \end{eqnarray}
 $$
 i=1,2,\ldots,N_{x}-1,\quad \alpha=1,2,\quad  \underline{W}^{(n-1)}_{i}=0,\quad i=0,N_{x}.
 $$
 By using  Theorem \ref{mc1}, we have $\left(\underline{U}^{(n)}_{\alpha,i}\right)_{GS}\geq \left(\underline{U}^{(n-1)}_{\alpha,i}\right)_{GS}$, $i=0,1,\ldots,N_{x}$, $\alpha=1,2$. From here and  (\ref{eq:comp3}), we conclude that
\begin{eqnarray}\label{eq:comp4}
 A_{\alpha,i}\underline{W}^{(n)}_{\alpha,i}+C_{\alpha,i}\underline{W}^{(n)}_{\alpha,i}&\geq& C_{\alpha,i}\underline{W}^{(n-1)}_{\alpha,i}+L_{\alpha,i}\underline{W}^{(n-1)}_{\alpha,i-1}+R_{\alpha,i}\underline{W}^{(n-1)}_{\alpha,i+1} \nonumber\\
 &-&F_{\alpha,i}\left((\underline{U}^{(n-1)}_{i})_{GS}\right)+F_{\alpha,i}\left((\underline{U}^{(n-1)}_{i})_{J}\right),  
\end{eqnarray}
$$
i=1,2,\ldots,N_{x}-1,\quad \alpha=1,2,\quad W^{(n-1)}_{i}=0,\quad i=0,N_{x}.
$$
Taking into account that $(A_{\alpha,i}+C_{\alpha,i})^{-1}\geq \emph{O}$, $L_{\alpha,i}\geq\emph{O}$, $R_{\alpha,i}\geq\emph{O}$, $i=1,2,\ldots,N_{x}-1$, $\alpha=1,2$, for $n=1$ in (\ref{eq:comp4}), in view of $(\underline{U}^{(0)}_{\alpha,i})_{GS}=(\underline{U}^{(0)}_{\alpha,i})_{J}$ and $\underline{W}^{(0)}_{\alpha,i}=\mathbf{0}$,  we conclude that
\begin{equation*}
  \underline{W}^{(1)}_{\alpha,i}\geq\mathbf{0},\quad i=0,1,\ldots,N_{x},\quad \alpha=1,2.
\end{equation*}
From here, for $n=2$ in (\ref{eq:comp4}), $L_{\alpha,i}\geq\emph{O}$, $R_{\alpha,i}\geq\emph{O}$ and using the monotone property (\ref{mono2}) on $F_{\alpha,i}$, we have
$$
\underline{W}^{(2)}_{\alpha,i}\geq\mathbf{0},\quad i=0,1,\ldots,N_{x},\quad \alpha=1,2.
$$
By induction on $n$, we prove that
$$
\underline{W}^{(n)}_{\alpha,i}\geq\mathbf{0},\quad i=0,1,\ldots,N_{x},\quad \alpha=1,2.
$$
Thus, we prove (\ref{eq:comp2}) for lower solutions. By following the same manner, we can prove (\ref{eq:comp2}) for upper solutions.
\end{proof}

\end{document}